\begin{document}
\theoremstyle{plain}

% NEW types of ``theorems''
%
\theoremstyle{plain}
\swapnumbers
    \newtheorem{thm}[figure]{Theorem}%[section]
    \newtheorem{prop}[figure]{Proposition}
    \newtheorem{lemma}[figure]{Lemma}
    \newtheorem{keylemma}[figure]{Key Lemma}
    \newtheorem{cor}[figure]{Corollary}
    \newtheorem{fact}[figure]{Fact}
    \newtheorem{subsec}[figure]{}
    \newtheorem*{propa}{Proposition A}
    \newtheorem*{thma}{Theorem A}
    \newtheorem*{thmb}{Theorem B}
    \newtheorem*{thmc}{Theorem C}
    \newtheorem*{thmd}{Theorem D}
\theoremstyle{definition}
    \newtheorem{defn}[figure]{Definition}
    \newtheorem{example}[figure]{Example}
    \newtheorem{examples}[figure]{Examples}
    \newtheorem{notn}[figure]{Notation}
    \newtheorem{summary}[figure]{Summary}
\theoremstyle{remark}
        \newtheorem{remark}[figure]{Remark}
        \newtheorem{remarks}[figure]{Remarks}
        \newtheorem{warning}[figure]{Warning}
    \newtheorem{assume}[figure]{Assumption}
    \newtheorem{ack}[figure]{Acknowledgements}
\renewcommand{\thefigure}{\arabic{section}.\arabic{figure}}
%
%   Special diagram/display environments
%
\newenvironment{myeq}[1][]
{\stepcounter{figure}\begin{equation}\tag{\thethm}{#1}}
{\end{equation}}
\newcommand{\myeqn}[2][]
{\stepcounter{figure}\begin{equation}
     \tag{\thethm}{#1}\vcenter{#2}\end{equation}}
\newcommand{\mydiag}[2][]{\myeq[#1]\xymatrix{#2}}
\newcommand{\mydiagram}[2][]
{\stepcounter{figure}\begin{equation}
     \tag{\thethm}{#1}\vcenter{\xymatrix{#2}}\end{equation}}
\newcommand{\mysdiag}[2][]
{\stepcounter{figure}\begin{equation}
     \tag{\thefigure}{#1}\vcenter{\xymatrix@R=10pt@C=20pt{#2}}\end{equation}}
\newcommand{\mytdiag}[2][]
{\stepcounter{figure}\begin{equation}
     \tag{\thethm}{#1}\vcenter{\xymatrix@R=15pt@C=25pt{#2}}\end{equation}}
\newcommand{\myrdiag}[2][]
{\stepcounter{figure}\begin{equation}
     \tag{\thethm}{#1}\vcenter{\xymatrix@R=20pt@C=10pt{#2}}\end{equation}}
\newcommand{\myqdiag}[2][]
{\stepcounter{figure}\begin{equation}
     \tag{\thefigure}{#1}\vcenter{\xymatrix@R=20pt@C=22pt{#2}}\end{equation}}
\newcommand{\myfigure}[2][]
{\stepcounter{figure}\begin{equation}
     \tag{\thefigure}{#1}\vcenter{#2}\end{equation}}
\newcommand{\mywdiag}[2][]
{\stepcounter{figure}\begin{equation}
     \tag{\thefigure}{#1}\vcenter{\xymatrix@R=20pt@C=12pt{#2}}\end{equation}}
\newcommand{\myzdiag}[2][]
{\stepcounter{figure}\begin{equation}
     \tag{\thethm}{#1}\vcenter{\xymatrix@R=5pt@C=20pt{#2}}\end{equation}}
%use:  \mydiagram[\label{``label''}]{``xy-picture syntax''}
%
\newenvironment{mysubsection}[2][]
{\begin{subsec}\begin{upshape}\begin{bfseries}{#2.}
\end{bfseries}{#1}}
{\end{upshape}\end{subsec}}
\newenvironment{mysubsect}[2][]
{\begin{subsec}\begin{upshape}\begin{bfseries}{#2\vsn.}
\end{bfseries}{#1}}
{\end{upshape}\end{subsec}}
\newcommand{\supsect}[2]
{\vspace*{-5mm}\quad\\\begin{center}\textbf{{#1}}\vsm.~~~~\textbf{{#2}}\end{center}}
\newcommand{\sect}{\setcounter{figure}{0}\section}
%
%            More space around math formulas
%
\newcommand{\wh}{\ -- \ }
\newcommand{\wwh}{-- \ }
\newcommand{\w}[2][ ]{\ \ensuremath{#2}{#1}\ }
\newcommand{\ww}[1]{\ \ensuremath{#1}}
\newcommand{\www}[2][ ]{\ensuremath{#2}{#1}\ }
\newcommand{\wwb}[1]{\ \ensuremath{(#1)}-}
\newcommand{\wb}[2][ ]{\ (\ensuremath{#2}){#1}\ }
\newcommand{\wref}[2][ ]{\ (\ref{#2}){#1}\ }
\newcommand{\wwref}[2]{\ (\ref{#1})-(\ref{#2})\ }
%
%    spacing
%
\newcommand{\hsp}{\hspace*{9 mm}}
\newcommand{\hs}{\hspace*{5 mm}}
\newcommand{\hsn}{\hspace*{1 mm}}
\newcommand{\hsm}{\hspace*{2 mm}}
\newcommand{\vsn}{\vspace{1 mm}}
\newcommand{\vs}{\vspace{5 mm}}
\newcommand{\vsm}{\vspace{3 mm}}
\newcommand{\vsp}{\vspace{9 mm}}
\newcommand{\lra}[1]{\langle{#1}\rangle}
\newcommand{\xra}[1]{\xrightarrow{#1}}
\newcommand{\lin}[1]{\{{#1}\}}

\newcommand{\blue}[1]{\textcolor{blue}{{#1}}}
\newcommand{\vare}{\varepsilon}
\newcommand{\hy}[2]{{#1}\text{-}{#2}}
\newcommand{\Al}{\mbox{\sf Alg}}
\newcommand{\Md}{\mbox{\sf Mod}}
\newcommand{\Alg}[1]{\hy{#1}{\Al}}
\newcommand{\Mod}[1]{\hy{#1}{\Md}}
\newcommand{\ModE}{\Mod{\bE}}
%
% Operator names
%
\newcommand{\Arr}{\operatorname{Arr}}
\newcommand{\cof}{\operatorname{cof}}
\newcommand{\ev}{\operatorname{ev}}
\newcommand{\F}{\operatorname{Fun}}
\newcommand{\holim}{\operatorname{holim}}
\newcommand{\Hom}{\operatorname{Hom}}
\newcommand{\Id}{\operatorname{Id}}
\newcommand{\im}{\operatorname{Im}}
\newcommand{\Ker}{\operatorname{Ker}}
\newcommand{\map}{\operatorname{map}}
\newcommand{\umap}{\underline{\map}\sb{\ast}}
\newcommand{\mapa}{\map\sb{\ast}}
\newcommand{\Map}{\operatorname{Map}}
\newcommand{\MapE}{\Map\sb{\TE}}
\newcommand{\New}{\operatorname{New}}
\newcommand{\Obj}{\operatorname{Obj}}
\newcommand{\op}{\sp{\operatorname{op}}}
\newcommand{\PO}[1]{{\mathbf P}\sp{#1}}
\newcommand{\Po}[1]{\Pc\sp{#1}}
\newcommand{\PNK}[2]{\PO{#1}\sb{#2}}
\newcommand{\Pnk}[2]{\Po{#1}\sb{#2}}
\newcommand{\proj}{\operatorname{proj}}
\newcommand{\sh}[1]{\operatorname{sh}\sp{#1}}
\newcommand{\St}{\operatorname{St}}
\newcommand{\Tot}{\operatorname{Tot}}
\newcommand{\sk}{\operatorname{sk}}
\newcommand{\we}{\operatorname{w.e.}}
\newcommand{\Ext}{\operatorname{Ext}}
\newcommand{\diag}{\operatorname{diag}}
\newcommand{\adj}[2]{\substack{{#1}\\ \rightleftharpoons \\ {#2}}}
\newcommand{\Sig}[1]{\Sigma\sb{#1}}
\newcommand{\C}{{\mathcal C}}
\newcommand{\II}{{\mathcal I}}
\newcommand{\JJ}{{\mathcal J}}
\newcommand{\Le}{{\mathcal L}\sb{\bE}}
\newcommand{\Lel}[1]{\Le\sp{#1}}
\newcommand{\LE}{\Lel{\lambda}}
\newcommand{\Pc}{{\mathcal P}}
\newcommand{\Ree}{{\mathcal R}\sb{\bE}}
\newcommand{\Rel}[1]{\Ree\sp{#1}}
\newcommand{\RE}{\Rel{\lambda}}
\newcommand{\Se}{{\mathcal S}\sb{\bE}}
\newcommand{\Sel}[1]{\Se\sp{#1}}
\newcommand{\SE}{\Sel{\lambda}}
\newcommand{\Te}{{\mathcal T}\sb{\bE}}
\newcommand{\Tel}[1]{\Te\sp{#1}}
\newcommand{\cTE}{\Tel{\lambda}}
\newcommand{\sTE}{\mathscr{T}\sb{\bE}}
\newcommand{\Ve}{{\mathcal V}\sb{\bE}}
\newcommand{\Vel}[1]{\Ve\sp{#1}}
\newcommand{\VE}{\Vel{\lambda}}
\newcommand{\G}{\mathcal{G}}
\newcommand{\Gp}{{\sf Gp}}
\newcommand{\Xill}[1]{\Xi\sb{#1}}
\newcommand{\Xil}{\Xill{\lambda}}
\newcommand{\SGa}{\Xil}
\newcommand{\Set}{\mbox{\sf Set}}
\newcommand{\Seta}{\Set\sb{\ast}}
\newcommand{\Sp}{{\sf Sp}}
\newcommand{\ST}{\Sp\sp{\TE}}
\newcommand{\STn}[1]{(\ST)\sp{#1}\sb{0}}
\newcommand{\Spaces}{{\sf Spaces}}
\newcommand{\Spa}{\Spaces\sb{\ast}}
\newcommand{\eSp}{\widehat{\Hom}}
\newcommand{\wSp}{\widehat{\Sp}}
%
%    Mapping algebras
%
\newcommand{\ff}{\mathfrak{f}}
\newcommand{\fg}{\mathfrak{g}}
\newcommand{\fM}{\mathfrak{M}}
\newcommand{\ME}{\fM\sb{E}}
\newcommand{\MES}{\ME\sp{\St}}
\newcommand{\MESl}{\ME\sp{\St,\lambda}}
\newcommand{\fmE}{\fM\sb{\bE}}
\newcommand{\fmel}[1]{\fmE\sp{#1}}
\newcommand{\fME}{\fmel{\lambda}}
\newcommand{\fW}{\mathfrak{W}}
\newcommand{\fU}{\mathfrak{U}}
\newcommand{\fWd}{\fW\sb{\bullet}}
\newcommand{\fUd}{\fU\sb{\bullet}}
\newcommand{\fX}{\mathfrak{X}}
\newcommand{\hfX}{\hat{\fX}}
\newcommand{\fXd}{\fX\sb{\bullet}}
\newcommand{\fY}{\mathfrak{Y}}
\newcommand{\fYd}{\fY\sb{\bullet}}
\newcommand{\fZ}{\mathfrak{Z}}
\newcommand{\The}{\mathbf{\Theta}\sb{E}}
\newcommand{\Thw}{\mathbf{\Theta}\sb{W}}
\newcommand{\Thel}[1]{\The\sp{#1}}
\newcommand{\TE}{\Thel{\lambda}}

%
%   Calligraphic face
\newcommand{\cL}{\mathcal{L}}
%     B bold face
%
\newcommand{\FF}{\mathbb F}
\newcommand{\Fp}{\FF\sb{p}}
\newcommand{\HFp}{\bH\Fp}
\newcommand{\NN}{\mathbb N}
\newcommand{\QQ}{\mathbb Q}
\newcommand{\ZZ}{\mathbb Z}
%
%           Boldface spaces
%
\newcommand{\bA}{{\mathbf A}}
\newcommand{\bB}{{\mathbf  B}}
\newcommand{\bD}{{\mathbf  D}}
\newcommand{\bE}{{\mathbf  E}}
\newcommand{\bF}{\mathbf{F}}
\newcommand{\Es}{E\sp{\ast}}
\newcommand{\bH}{{\mathbf H}}
\newcommand{\bK}{\mathbf{K}}
\newcommand{\bL}{\mathbf{L}}
\newcommand{\bM}{{\mathbf M}}
\newcommand{\hM}{\widehat{\bM}}
\newcommand{\bN}{{\mathbf N}}
\newcommand{\hN}{\widehat{\bN}}
\newcommand{\bP}{{\mathbf P}}
\newcommand{\hP}{\widehat{\bP}}
\newcommand{\bR}{{\mathbf R}}
\newcommand{\bS}[1]{{\mathbf S}\sp{#1}}
\newcommand{\bU}{{\mathbf U}}
\newcommand{\wV}{\widetilde{V}}
\newcommand{\Vd}{\wV\sb{\bullet}}
\newcommand{\bW}{{\mathbf W}}
\newcommand{\Wu}{\bW\sp{\bullet}}
\newcommand{\Uu}{\bU\sp{\bullet}}
\newcommand{\Du}{\Delta\sp{\bullet}}
\newcommand{\hWu}{\widehat{\bW}\sp\bullet}
\newcommand{\bX}{{\mathbf X}}
\newcommand{\Xu}{\bX\sp{\bullet}}
\newcommand{\bY}{{\mathbf Y}}
\newcommand{\bZ}{{\mathbf Z}}
\newcommand{\bQ}{{\mathbf Q}}
\newcommand{\bo}{\mathbf 1}

% map
\newcommand{\Gu}{G^\bullet}
%
%            Spectral sequences
%
\newcommand{\Eut}[2][ ]{$E\sp{#2}$-term{#1}}
\newcommand{\Elt}[2][ ]{$E\sb{#2}$-term{#1}}
\newcommand{\Ett}[1][ ]{$E\sb{2}$-term{#1}}

%        arrow sequence
\newcommand{\eM}{e \sb{\bM}}
\newcommand{\peM}{e \sb{\bM'}}
\newcommand{\abM}{b \sb{\bM}}
\newcommand{\pbM}{b \sb{\bM'}}

% cardinals

\newcommand{\wlY}{\widehat{\lambda \sb{\bY}}}
\newcommand{\wlZ}{\widehat{\lambda \sb{\bZ}}}
\newcommand{\card}{\operatorname{card}}
\newcommand{\sJ}{\mathscr{J}}

\setcounter{section}{-1}
\title{Mapping algebras and the Adams spectral sequence}

\author{David Blanc}
\email{blanc@math.haifa.ac.il}
%\address{Department of Mathematics, University of Haifa, Haifa-3498838, Israel.}
%
\author{Surojit Ghosh}
\email{surojitghosh89@gmail.com; gsurojit@campus.haifa.ac.il}
\address{Department of Mathematics, University of Haifa, Haifa-3498838, Israel.}
\subjclass[2010]{Primary:  55T15; secondary: 55P42, 55U35}
\keywords{Spectral sequence, truncation, differentials, cosimplicial resolution,
  mapping algebra}
\maketitle

\begin{abstract}
For a suitable ring spectrum, such as \w[,]{\bE=\HFp} the \Ett of the $\bE$-based
Adams spectral sequence for a spectrum $\bY$ may be described in terms of its cohomology
\w[,]{\Es\bY} together with the action of the primary operations \w{\Es\bE} on it.
We show how the higher terms of the spectral sequence can be similarly described
in terms of the higher order truncated $\bE$-\emph{mapping algebra} for $\bY$ \wh
that is truncations of the function spectra \w{\F(\bY,\bM)} for various $\bE$-modules
$\bM$, equipped with the action of \w{\F(\bM,\bM')} on them.
\end{abstract}

\maketitle
%
%c0      Introduction
%
\sect{Introduction}

The Adams spectral sequence is an important tool in stable homotopy theory, originally
introduced in \cite{Ada58} in order to compute the stable homotopy groups
of the sphere (at a prime $p$), using the Eilenberg-MacLane spectrum \w[.]{\bE=\HFp}
It was later generalized by Novikov in \cite{Nov67} to more general ring spectra
$\bE$.

The information needed to determine the \Ett of the $\bE$-based Adams spectral sequence
for a spectrum $\bY$ are the $\bE$-cohomology groups of $\bY$, together with the action of
the primary $\bE$-cohomology operations on $\bY$. More generally, we must consider
the homotopy classes \w{[\bY,\bM]} for all $\bE$-module spectra $\bM$, together with the
action of \w{[\bM,\bM']} on them (see \cite[3.1]{Bou01} to understand why this may
be necessary for general $\bE$).

However, it is not  \emph{a priori} clear what higher order information is
needed in order to determine the \Elt[s]{r} for \w[.]{r>2} As we shall see,
it turns out that it is sufficient to know the \wwb{r-2}truncation
\w{\PO{r-2}\ME\bY\lra{0}} (see \S \ref{window}) of the $\bE$-\emph{mapping algebra}
\w{\ME\bY} for $\bY$ \wh that is the function spectra
\w{\F(\bY,\bM)} for various $\bE$-modules $\bM$, equipped with the action of
\w{\F(\bM,\bM')} on them.

An explicit computation of was carried

Work of the late Hans Baues and his collaborators shows that the \Elt{3} of the usual
Adams spectral sequence, for \w{\bY=\bS{0}} and for \w[,]{\bE=\HFp} 
might be accessible to computation using the ``secondary Steenrod algebra'',
equivalent to the first Postnikov section \w{P\sp{1}\ME\bE} of the \ww{\Fp}-mapping
algebra (see \cite{BJ11}).
The structure of the analogous unstable Adams spectral sequence was studied in
\cite{BBC19} (which identifies the \Elt[s]{r} as certain truncated derived functors)
and in \cite{BBS17} (which describes the differentials as higher cohomology operations).

Following \cite{BS18}, we use a specific version of \w{\ME\bY} to
construct a cosimplicial Adams resolution \w[,]{\bY\to\Wu} so that the
homotopy spectral sequence for \w{\F(\bZ,\Wu)} is the $\bE$-based Adams spectral sequence
for \w[.]{\F(\bZ,\bY)} Analysis of the differential \w{d\sb{r-1}} shows that it
only depends on the \wwb{r-2}truncation of \w[,]{\ME\Wu} and thus
that the \Elt[s]{r} are determined by \w[.]{\PO{r-2}\ME\bY\lra{0}}

\begin{mysubsection}{Outline}
Section \ref{css} recalls some facts about the category \w{\Sp} of
symmetric spectra and Section \ref{cma} defines our main technical
tool: spectral functors defined on small categories \w{\TE} of
$\bE$-modules, for a fixed ring spectrum $\bE$, and their
truncations. In Section \ref{cdsc} we define \emph{mapping
algebras} \wh a generalization of the representable spectral
functor \w{\fME\bY} (defined by \w[).]{\bM\mapsto\F(\bY,\bM)} We
use this to construct a monad on spectra, which we analyze in
Section \ref{csma} in order to overcome certain set-theoretical
difficulties. This allows to obtain our first result, in Section
\ref{ccsass}:

\begin{thma}
If $\bE$ is a ring spectrum and $\bY$ an \ww{\bE}-good symmetric spectrum, we
can associate to the representable mapping algebra \w{\fME\bY} a cosimplicial
spectrum \w{\Wu} such that \w{\Tot \Wu} is $\bE$-equivalent to $\bY$.
\end{thma}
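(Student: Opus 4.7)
The plan is to construct $\Wu$ as the cobar-type cosimplicial resolution associated to the monad $T$ on $\Sp$ obtained from the representable mapping algebra construction of Section \ref{cdsc}. Writing $\fME$ for the $\lambda$-truncated mapping algebra functor, $T$ is the composition of $\fME$ with its left adjoint $L$. One then sets $\Wu^{n} := T^{n+1}\bY$, with coface maps induced by the unit $\eta : \Id \Rightarrow T$, codegeneracies induced by the multiplication $\mu : T^{2} \Rightarrow T$, and coaugmentation $\bY \to \Wu^{0}$ given by $\eta_{\bY}$. This is the standard monadic resolution, and its homotopical behavior is governed by the interplay between $T$ and the $\bE$-module structure it induces.

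First I would invoke the set-theoretical analysis of Section \ref{csma}: by passing from the a priori proper-class-valued mapping algebra to the $\lambda$-truncated version $\fmel{\lambda}$ for a sufficiently large regular cardinal $\lambda$, the monad $T$ becomes well-defined on a small model of $\Sp$, and the cosimplicial identities for $\Wu$ hold strictly (or admit a functorial strictification). This is precisely where the set-theoretical difficulties mentioned in the outline are disposed of.

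Second I would verify that $T\bY$ is naturally an $\bE$-module spectrum \wh it plays the role of $\bE \wedge \bY$ in the classical Adams resolution. Consequently the coaugmented cosimplicial object $\bE \wedge \bY \to \bE \wedge \Wu$ admits an extra codegeneracy, yielding a contracting homotopy after coaugmentation; this forces $\bE \wedge \bY \simeq \Tot(\bE \wedge \Wu)$, so in particular the natural map $\bY \to \Tot \Wu$ is an $\bE$-cohomology isomorphism.

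Finally, the $\bE$-good hypothesis on $\bY$ is what converts this cohomological information into the claimed $\bE$-equivalence: it guarantees vanishing of the relevant $\lim^{1}$ obstruction, so that $\bY \to \Tot \Wu$ realizes the $\bE$-nilpotent completion and hence is an $\bE$-equivalence. The main obstacle is the interplay between set-theoretical truncation and the homotopical computation of $\Tot$: one must work with a Reedy fibrant replacement of $\Wu$ for $\Tot$ to carry the correct homotopical meaning, and one must verify that passing to the $\lambda$-truncated test category $\TE$ does not destroy the $\bE$-module structure on $T\bY$ that drives the extra-codegeneracy argument. A secondary technical point is ensuring that $L \circ \fME$ is a genuine strict monad on $\Sp$ rather than merely a homotopy-coherent one, which is exactly the content of the analysis in Section \ref{csma}.
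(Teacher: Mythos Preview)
Your overall shape is right --- monadic bar resolution, then invoke $\bE$-goodness and Bousfield's machinery --- but two points are off.

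First, the adjunction runs the other way: \w{\LE=\rho\circ\fME:\Sp\to\Xil\op} is the \emph{left} adjoint, with right adjoint \w{\RE} given by the Stover-type product of Lemma \ref{lrightadj}, so the monad on \w{\Sp} is \w{\cTE=\RE\circ\LE} and the comonad \w{\SE=\LE\circ\RE} lives on arrow sets. More importantly, your mechanism for acyclicity is not the one that applies here: \w{T\bY} is not \w{\bE\otimes\bY} but a large product of $\bE$-modules (see \wref[),]{rightadj} so there is no evident extra codegeneracy in \w[.]{\bE\otimes\Wu} The paper instead uses the comonad \w{\SE} on \w[:]{\Xil\op} the standard comonad resolution of the coalgebra \w{\rho\fME\bY} has an extra degeneracy, which (via Corollary \ref{grpstr}) translates into acyclicity of \w{[\Wu,\bM]\to[\bY,\bM]} for \w{\bM\in\TE} (Proposition \ref{acyclic_reso}).

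Second --- and this is the substantive gap --- you have misidentified the purpose of Section \ref{csma}. The issue is not making $T$ a strict monad or keeping it small-valued; the monad \w{\cTE} is already strict. The problem is that the comonad argument above only yields acyclicity of \w{[\Wu,\bM]} for $\bM$ in the $\lambda$-bounded category \w[,]{\TE} whereas Bousfield's identification of \w{\Tot\Wu} with the \ww{\G(\bE)}-completion (\cite[\S 9]{Bou03}) requires acyclicity for \emph{every} $\bE$-module $\bM$, of arbitrary size. Proposition \ref{card} is precisely what bridges this: it shows \w{\sTE\bX\cong\Tel{\kappa}\bX} for every \w[,]{\kappa\geq\lambda_{\bX}} so the cosimplicial spectrum built from \w{\sTE} coincides with the one built from \w{\Tel{\kappa}} for arbitrarily large $\kappa$. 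Given any \w[,]{\bM\in\ModE} one chooses $\kappa$ large enough that \w{\bM\in\Thel{\kappa}} and applies the comonad acyclicity argument at that level. This cardinality-independence of the construction, not strictness or coherence, is what Section \ref{csma} delivers, and it is the step your outline does not supply.
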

\noindent See Theorem \ref{main1} below\vsm .

In Section \ref{cdiff}  we analyze the differentials in
the $\bE$-based Adams spectral sequence for \w{\F(\bZ,\bY)} (in its cosimplicial
version), and show:

\begin{thmb}
Given $\bE$, $\bZ$, and $\bY$ as above, for each \w[,]{r\geq 1} the
\ww{d\sb{r}}-differential in the $\bE$-based Adams spectral sequence for
\w[,]{\F(\bZ,\bY)} and thus its \ww{E\sb{r+1}}-term, can be calculated from
the cosimplicial \wwb{r-1}truncated space \w[.]{\Pnk{r-1}{0}\fME\bZ\lin{\Wu}}
\end{thmb}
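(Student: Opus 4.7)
The plan is to invoke Theorem A, which identifies the $\bE$-based Adams spectral sequence for $\F(\bZ,\bY)$ with the homotopy spectral sequence of the cosimplicial spectrum $\F(\bZ,\Wu)$, and then to analyze $d\sb{r}$ using the standard obstruction-theoretic description of differentials in a Bousfield-Kan style spectral sequence.

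First, I would observe that $\F(\bZ,\Wu)$ coincides with the cosimplicial object $\fME\bZ\lin{\Wu}$ obtained by evaluating the representable mapping algebra $\fME\bZ$ of Section \ref{cdsc} on $\Wu$: since every $\bW\sp{n}$ is built out of $\bE$-modules together with the primary operations $\F(\bM,\bM')$, the cosimplicial spectrum $\F(\bZ,\Wu)$ with all its cofaces and codegeneracies is entirely determined by $\fME\bZ$ evaluated levelwise at $\Wu$. Consequently the entire $E\sb{r}$-page of the spectral sequence can be extracted from $\fME\bZ\lin{\Wu}$, and the theorem reduces to showing that the Postnikov $(r-1)$-truncation suffices to compute $d\sb{r}$.

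Next, I would unpack a representative recipe for $d\sb{r}$ in the homotopy spectral sequence of $\fME\bZ\lin{\Wu}$: an element of $E\sb{r}\sp{s,t}$ is represented by a class in $\pi\sb{t}$ of cosimplicial level $s$ whose coface boundary admits a chain of $r-1$ consecutive null-homotopies extending it up to cosimplicial level $s+r-1$, and $d\sb{r}$ is read off as the obstruction to extending one further step, living in the appropriate homotopy group at cosimplicial level $s+r$. By inspection of this inductive procedure, the class and each of its successive null-homotopies are represented by maps whose homotopy type is visible in $\pi\sb{j}$ for $j\leq t$ of the function spectra $\F(\bZ,\bW\sp{k})$, and the primary operations used to form the cosimplicial coboundary at each stage are already encoded in the mapping algebra structure. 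All of this data is therefore captured by $\Pnk{r-1}{0}$ applied to $\fME\bZ\lin{\Wu}$ at each cosimplicial degree.

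The main obstacle will be to check that truncating $\fME\bZ\lin{\Wu}$ at Postnikov level $r-1$ \emph{before} performing the extension process yields exactly the same obstruction as truncating after, i.e.\ that the truncated cosimplicial object $\Pnk{r-1}{0}\fME\bZ\lin{\Wu}$ still carries enough of the higher mapping-algebra action to support the construction of the $r-1$ consecutive null-homotopies and to detect the final obstruction. This requires careful bookkeeping of which homotopy groups of which function spectra each successive null-homotopy inhabits, and a verification that the higher mapping-algebra operations invoked at each extension step only involve data of Postnikov level $\leq r-1$. The payoff is an inductive argument showing that the whole of $d\sb{r}$, and hence the $E\sb{r+1}$-term, is intrinsically a function of the cosimplicial $(r-1)$-truncated space $\Pnk{r-1}{0}\fME\bZ\lin{\Wu}$.
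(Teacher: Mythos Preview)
Your plan is essentially the paper's own argument: identify the Adams spectral sequence with the homotopy spectral sequence of the cosimplicial spectrum \w[,]{\fME\bZ\lin{\Wu}=\F(\bZ,\Wu)} describe \w{d\sb{r}} on a class \w{\gamma\in E\sb{1}\sp{n,n+k}} via successive extensions of a cosimplicial map \w{\sk\sb{n}(\Du)\sb{+}\otimes\Sigma\sp{k}\bZ\to\Wu} through \w[,]{\sk\sb{n+r-1}} and observe that the $j$-th extension is encoded by a $j$-simplex in the relevant mapping space, so that everything needed lives in the \wwb{r-1}truncation. The one place where your outline is imprecise is the phrase ``visible in \w{\pi\sb{j}} for \w['':]{j\leq t} the operative bound is the \emph{simplicial} dimension \w{\leq r-1} of the successive nullhomotopies (governed by the page $r$, not the internal degree $t$), and the paper's proof makes exactly this point concrete by exhibiting the representative of \w{d\sb{r}(\gamma)} as an explicit \wwb{r-1}simplex \w{b\sb{r-1}\in(\Pnk{r-1}{0}\fME\Sigma\sp{k}\bZ\lin{\Omega\sp{n}\bW\sp{n+r}})\sb{r-1}} and tracking how the earlier choices \w{b\sb{0},\dotsc,b\sb{r-2}} enter through the cosimplicial coface maps.
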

\noindent See Theorem \ref{pdiff} below\vsm.

We then use the resolution model category of truncated spectral functors to deduce:

\begin{thmc}
If \w{\bE=\bH R} for a commutative ring $R$, $\bZ$ is a fixed finite spectrum, and
$\bY$ is a $\bE$-good spectrum, then for any \w{r\geq 0} the
\Elt{r+2} of the $\bE$-based Adams spectral sequence for \w{\F(\bZ,\bY)} is
determined by the $r$-truncation \w{\Pnk{r}{0}\fME\bY} of the $\bE$-mapping
algebra of $\bY$.
\end{thmc}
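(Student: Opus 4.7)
The plan is to reduce Theorem C to Theorem B by means of the resolution model category on cosimplicial $r$-truncated spectral functors. Applying Theorem B with $r$ replaced by \w[,]{r+1} the \Elt{r+2} of the \ww{\bE}-based Adams spectral sequence for \w{\F(\bZ,\bY)} is determined by the cosimplicial $r$-truncated space \w[,]{\Pnk{r}{0}\fME\bZ\lin{\Wu}} where \w{\Wu} is the cosimplicial resolution supplied by Theorem A. It therefore suffices to show that this cosimplicial truncated space is determined, up to level-wise weak equivalence, by \w[.]{\Pnk{r}{0}\fME\bY}

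Recall from the proof of Theorem A that \w{\Wu} is built from a monadic bar-style resolution arising from the free \ww{\bE}-mapping algebra adjunction. The key observation is that this monad is compatible with the truncation functor \w[,]{\Pnk{r}{0}(-)} so that its restriction yields a monad on the category of $r$-truncated spectral functors whose algebras are precisely the $r$-truncated mapping algebras. Consequently \w{\Pnk{r}{0}\fME\bZ\lin{\Wu}} agrees, levelwise and cosimplicially, with the evaluation at $\bZ$ of the analogous bar-construction resolution of \w[.]{\Pnk{r}{0}\fME\bY}

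Next, in the resolution model structure on cosimplicial $r$-truncated spectral functors this bar resolution is a cofibrant replacement of the constant cosimplicial object on \w[,]{\Pnk{r}{0}\fME\bY} hence determined up to level-wise weak equivalence. Because \w{\bZ} is a finite spectrum, evaluation at $\bZ$ descends to a homotopy-invariant functor on such cofibrant resolutions into cosimplicial pointed spaces, so \w{\Pnk{r}{0}\fME\bZ\lin{\Wu}} depends up to equivalence only on \w[.]{\Pnk{r}{0}\fME\bY} Combining with Theorem B yields the desired description of the \Elt[.]{r+2}

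The hypothesis \w{\bE=\bH R} enters to ensure that \ww{\bE}-module spectra are Postnikov-controlled enough for the truncation \w{\Pnk{r}{0}} to interact cleanly with the mapping-algebra monad; finiteness of $\bZ$ ensures that \w{\F(\bZ,-)} preserves the relevant cosimplicial homotopy type. The main technical obstacle is verifying that the free-mapping-algebra monad really descends faithfully to the truncated setting \wh that is, that the free $r$-truncated mapping algebra on a spectrum is the $r$-truncation of the free untruncated mapping algebra, so that the two bar constructions can be identified compatibly with all face and degeneracy maps.
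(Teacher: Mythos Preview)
Your overall strategy \wh reduce to Theorem~B and then argue that the cosimplicial truncated space \w{\Pnk{r}{0}\fME\bZ\lin{\Wu}} depends, up to equivalence, only on \w{\Pnk{r}{0}\fME\bY} \wwh is the right one, and is what the paper does.  However, the mechanism you propose for the second step has a genuine gap.

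You assert that the ``free mapping-algebra monad'' is compatible with \w[,]{\Pnk{r}{0}} so that the bar resolution of \w{\Pnk{r}{0}\fME\bY} in the truncated world coincides, levelwise, with \w{\Pnk{r}{0}} applied to \w[.]{\Wu}  You yourself flag this as ``the main technical obstacle,'' and indeed it is not established.  The construction of \w{\Wu} is via the comonad \w{\SE=\LE\circ\RE} on \emph{arrow sets} (equivalently, the monad \w{\cTE} on \w[),]{\Sp} and producing \w{\bW\sp{n+1}} from \w{\bW\sp{n}} requires the honest spectrum \w[,]{\bW\sp{n}} not merely its $r$-truncated mapping algebra.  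There is no obvious monad on $r$-truncated spectral functors whose bar construction recovers \w[,]{\Pnk{r}{0}\fME\Wu} and your claim that ``the free $r$-truncated mapping algebra on a spectrum is the $r$-truncation of the free untruncated mapping algebra'' does not by itself give the required identification of \emph{iterated} applications together with all cosimplicial structure maps.

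The paper circumvents this entirely.  Rather than trying to descend the specific bar construction, it takes an \emph{arbitrary} cofibrant resolution \w{\fUd} of \w{\fME\bY} in the \ww{\Pnk{r}{0}}-resolution model structure on simplicial spectral functors (\S\ref{srmc}(b)); by construction this depends only on \w{\Pnk{r}{0}\fME\bY} up to \ww{E\sp{2}}-equivalence.  It then invokes a realization theorem from \cite{BBC19} to produce a cosimplicial spectrum \w{\Uu} with \w{\fME\Uu} Reedy equivalent to \w[,]{\fUd} obtains an \ww{E\sp{2}}-equivalence \w{\Uu\to\Wu} (via Proposition~\ref{E2_equi}), and observes that the resulting map of homotopy spectral sequences is an isomorphism from the \Ett{} on.  Theorem~B applied to \w{\Uu} then finishes the argument.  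The hypothesis \w{\bE=\bH R} is needed precisely for the realization step from \cite{BBC19}, not for any ``Postnikov control'' of $\bE$-modules as you suggest.
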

\noindent See Theorem \ref{main2} below.
\end{mysubsection}

\begin{notn}
  We denote the category of sets by \w[,]{\Set} that of pointed sets by \w[,]{\Seta}
that of simplicial sets (called \emph{spaces}) by \w[,]{\Spaces}  and
  that of pointed simplicial sets by \w[.]{\Spa} For \w[,]{X,Y\in\Spa}
  \w{X\wedge Y:=(X\times Y)/(X\vee Y)} is the usual smash product.
    For any category $\C$ and \w[,]{A,B\in\C} we write \w{\C(A,B)} for
  \w[.]{\Hom\sb{\C}(A,B)\in\Set}
\end{notn}

\begin{ack}
We would like to thank the referee for his or her detailed and pertinent comments.
This research was supported by Israel Science Foundation grant 770/16.
\end{ack}

%
%c1      Symmetric spectra
%
\sect{Symmetric spectra}
\label{css}

In this section we recall from \cite{HSS} some basic facts about symmetric spectra.
We prefer this model for the stable homotopy category because it has useful set-theoretic
properties.

\begin{defn}\label{dsymsp}
A \emph{symmetric spectrum} is a sequence of pointed spaces (simplicial sets)
\w{\bX=(X_n)_{n \geq 0}} equipped with:
\begin{enumerate}
\renewcommand{\labelenumi}{(\arabic{enumi})}
\item A pointed map \w{\sigma :S\sp{1}\wedge X_n \to X_{n+1}} for each \w[;]{n \geq 0}
\item A basepoint-preserving left action of the symmetric group \w{\Sig{n}} on \w[,]{X_n}
such that the composite
$$
\sigma^p = \sigma \circ (S\sp{1}\wedge\sigma) \circ \cdots
\circ (S\sp{p-1}\wedge \sigma):S^p \wedge X_n \to X_{n+p}
$$
\noindent is \ww{\Sig{p}\times\Sig{n}}-equivariant for \w{p \geq 1} and
\w[.]{n \geq 0}
\end{enumerate}

A map \w{f:\bX \to\bY} of symmetric spectra is a sequence of \ww{\Sig{n}}-equivariant
maps \w{f_n : X_n \to Y_n} such that the diagram
$$
\xymatrix{S^1 \wedge X_n \ar[d]_{\Id\sb{S^1} \wedge f_n}\ar[r]^{\sigma}   & X_{n+1}\ar[d]^{f_{n+1}}\\
  {S^1} \wedge Y_n \ar[r]^{\sigma}   & Y_{n+1}}
$$
\noindent commutes for all \w[.]{n \geq 0} We denote the category of symmetric spectra by
\w[.]{\Sp}

The \emph{smash product} of \w{\bX,\bY\in\Sp} is defined to be the symmetric spectrum
\w{\bX\otimes\bY} given by
$$
(\bX\otimes\bY)_n: = \bigvee_{p+q=n} {\Sig{n}}_+ \wedge_{\Sig{p}\times\Sig{q}} X_p \wedge Y_q.
$$
Given \w{\bX\in\Sp} and a pointed space $K$, the symmetric spectrum
\w{K\otimes \bX} is defined by \w{(K\otimes \bX)\sb{n}:=K\wedge X\sb{n}}
(see \cite[\S 1.3]{HSS}).
\end{defn}

\begin{mysubsection}{The model category of symmetric spectra}
The (stable) model structure on \w{\Sp} is defined in \cite{HSS} as follows:

A map \w{f:\bX\to\bY} of symmetric spectra is
\begin{enumerate}
\renewcommand{\labelenumi}{(\roman{enumi})}
\item a \emph{stable equivalence} if it induces an
isomorphism in stable homotopy groups (forgetting the \ww{\Sigma_n}-actions).
\item a \emph{level trivial fibration} if at each level it is
a trivial Kan fibration of simplicial sets.
\item a \emph{stable cofibration} if it has the left lifting property with
respect to level trivial fibrations.
\item a \emph{stable fibration} if it has the right lifting property with respect to
  every stable cofibration which is a stable equivalence.
\end{enumerate}

By \cite[Theorem 3.4.4]{HSS}, the classes of stable equivalences, cofibrations,
and fibrations define a proper, simplicial, symmetric model category structure on \w[,]{\Sp}
monoidal with respect to $\otimes$. The simplicial enrichment is given by
$$
\map\sb{\Sp}(\bX,\bY)\sb{n} = \Sp(\Delta[n]\sb{+}\otimes\bX,\bY)~.
$$
\end{mysubsection}

\begin{defn}\label{function_sp}
Given \w[,]{\bX,\bY\in\Sp} the \emph{function spectrum} \w{\F(\bX,\bY)} is defined to
be the symmetric spectrum given by
\begin{myeq}\label{eqfunsp}
\F(\bX,\bY)\sb{n} := \map\sb{\Sp}(\bX,\sh{n}\bY)
\end{myeq}
where \w{\sh{n}\bY} is the $n$-shifted symmetric spectrum given by \w[.]{(\sh{n}\bY)_k= Y_{n+k}}
The action of the symmetric group \w{\Sigma_n} is induced from the action on \w[.]{\sh{n}\bY} One may see \cite[Remark 2.2.12]{HSS} for the symmetric spectra structures on \w{\sh{n}\bY} and \w[.]{\F(\bX,\bY)}
\end{defn}

\begin{remark}
We have adjoint functors
\w{\Spa\adj{\Sigma\sp{\infty}}{\Omega\sp{\infty}} \Sp} with
$$
\Omega\sp{\infty}\F(\bX,\bY)~\simeq~\map_{\Sp}(\bX,\bY)~.
$$
Moreover, given \w[,]{\bX,\bY,\bZ\in\Sp} by \cite[Theorem 2.2.10]{HSS} there is a
natural adjunction isomorphism
\begin{myeq}\label{eqfunmap}
\Sp(\bX \otimes\bY,\bZ) \cong \Sp(\bX, \F(\bY,\bZ))~.
\end{myeq}

For any \w[,]{\bX\in\Sp} the function spectra \w{\Omega\bX:=\F(\bS{1},\bX)} and
  \w{P\bX:=\F(\Sigma\sp{\infty}\Delta[1]\sb{+},\bX)} are called \emph{loop} and \emph{path}
  spectra of $\bX$, respectively. Note that \w{\F(\bX,-)} commutes with the loop and path
  constructions.
\end{remark}

\begin{defn}\label{ringmod}
A \emph{symmetric ring spectrum} is a symmetric spectrum $\bR$ together with
spectrum maps \w{m:\bR\otimes\bR\to\bR} (\emph{multiplication}) and
\w{\iota:\bS{0}\to\bR} (the \emph{unit} map) with
\w[,]{m \circ (m \otimes \Id) = m \circ (\Id \otimes m)} such that
\w{m\circ(\iota\otimes\Id):\bS{0}\otimes\bR\to\bR}
and \w{m\circ(\Id\otimes\iota):\bR\otimes\bS{0}\to\bR} are the standard equivalences.

An \emph{$\bR$-module} for a symmetric ring spectrum $\bR$ is a symmetric spectrum $\bM$
equipped with a spectrum map \w{\mu:\bR\otimes\bM\to\bM} with
\w{\mu\circ(m\otimes\Id)=\mu\circ(\Id\otimes \mu)} and the map \w{\mu \circ(\iota \otimes \Id): \bS{0}\otimes \bM \to \bM} is the standard equivalence.

For any symmetric spectrum $\bY$ and symmetric ring spectrum $\bR$, the function spectrum
\w{\F(\bY,\bR)} admits a module structure over $\bR$.
\end{defn}

\begin{notn}
For any symmetric spectrum $\bX$, we write \w{\|\bX\|:=\sup\sb{n\in\NN}\|X_n\|}
(the cardinality of the simplicial set). Note that if $\lambda$ is a limit cardinal and
\w[,]{\|\bX\|<\lambda} then $\bX$ is \emph{$\lambda$-small}
in the usual sense (see \cite[Definition 10.4.1]{Hir03}). 
\end{notn}

%
%c2      Spectral functors
%
\sect{Spectral functors}
\label{cma}

Our main technical tool in this paper is the following:

\begin{defn}\label{mapalg}
  Given a symmetric ring spectrum $\bE$, let \w{\ModE} denote (a skeleton of) the full
  subcategory of $\bE$-module spectra in \w[.]{\Sp} We consider spectral functors
  \w{\fX:\ModE\to\Sp}
(that is, functors respecting the spectral enrichment), and write \w{\fX\lin{\bM}\in\Sp}
  for the value of $\fX$ at \w[.]{\bM\in\ModE} The spectral enrichment \wh or
  rather, its truncations \wh will play a central role in the paper; our main point
  is that these provide the data needed to compute the higher differentials in the
  Adams spectral sequence.
A functor of the form \w{\bM\mapsto\F(\bY,\bM)} for some fixed \w{\bY\in\Sp}
will be called \emph{representable}.

If $\lambda$ is some limit cardinal, the corresponding $\bE$-\emph{spectral theory}
is the full subcategory \w{\TE} of \w{\ModE} consisting of
all $\bE$-module spectra which are $\lambda$-small.
We denote by \w{\ST} the category of all spectral functors from \w[.]{\TE}
Note that the \ww{\TE}-spectral functor represented by \w[,]{\bY\in\Sp}
denoted by \w[,]{\fME\bY} is a homotopy functor (that is, it preserves weak
equivalences). When \w[,]{\bY\in\TE} we say that \w{\fME\bY} is \emph{free}.
Observe that \w{\fME\bY} is contravariant in the variable $\bY$.
\end{defn}

\begin{lemma}\label{yoneda}
If $\fX$ is any \ww{\TE}-spectral functor and \w{\fME\bM} is free (for
\w[),]{\bM\in\TE} there is a natural isomorphism
\w[.]{\ST(\fME\bM,\,\fX)~\cong~\Sp(\bS{0},\,\fX\lin{\bM})}
In particular, if \w{\fX=\fME\bY} is representable, a map \w{\bS{0}\to\fX\lin{\bM}}
corresponds to a map of spectra \w{\bY\to\bM} by \wref{eqfunmap} \wwh that is,
$$
\ST(\fME\bM,\, \fME\bY)~\cong~\Sp(\bS{0}, \fME\bY\lin{\bM})~\cong~\Sp(\bY, \bM)~.
$$
\end{lemma}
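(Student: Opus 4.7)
The plan is to recognize this statement as the enriched Yoneda lemma, applied at the level of hom-sets (i.e.\ after applying $\Sp(\bS{0},-)$ to the internal hom), in the $\Sp$-enriched category $\TE$. The key ingredient is the unit map $\eta\sb{\bM}:\bS{0}\to\F(\bM,\bM)=\fME\bM\lin{\bM}$ that is adjoint under \eqref{eqfunmap} to the identity $\bM\cong\bS{0}\otimes\bM\to\bM$. This is the ``universal element'' of the representable functor $\fME\bM$, and the whole proof is organized around it.

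First I would define the forward map $\Phi\colon\ST(\fME\bM,\fX)\to\Sp(\bS{0},\fX\lin{\bM})$. Given a spectral natural transformation $\psi\colon\fME\bM\to\fX$, set $\Phi(\psi):=\psi\sb{\bM}\circ\eta\sb{\bM}\colon\bS{0}\to\fX\lin{\bM}$. This is clearly functorial in $\fX$.

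Next I would construct the inverse $\Psi$. Given a map $f\colon\bS{0}\to\fX\lin{\bM}$, I use the spectral enrichment of $\fX$: for every $\bN\in\TE$ there is a structure map of spectra
$$
\fX\sb{\bM,\bN}\colon\F(\bM,\bN)=\fME\bM\lin{\bN}\;\longrightarrow\;\F(\fX\lin{\bM},\fX\lin{\bN}),
$$
which by \eqref{eqfunmap} corresponds to an evaluation map $\fME\bM\lin{\bN}\otimes\fX\lin{\bM}\to\fX\lin{\bN}$. Smashing $f$ in and using $\fME\bM\lin{\bN}\cong\fME\bM\lin{\bN}\otimes\bS{0}$ produces a component
$$
\Psi(f)\sb{\bN}\colon\fME\bM\lin{\bN}\xrightarrow{\Id\otimes f}\fME\bM\lin{\bN}\otimes\fX\lin{\bM}\xrightarrow{\;\ev\;}\fX\lin{\bN}.
$$
Naturality (in fact spectral naturality) of $\Psi(f)$ in $\bN$ is then a formal consequence of the associativity/compatibility of the spectral structure maps $\fX\sb{\bM,\bN}$ and $\fME\bM\sb{\bN,\bN'}$, i.e.\ the fact that $\fX$ respects composition in the enriched sense.

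Finally I would verify $\Phi\circ\Psi=\Id$ and $\Psi\circ\Phi=\Id$. The first identity reduces to the unit law for the spectral enrichment of $\fX$ (the composite $\bS{0}\otimes\fX\lin{\bM}\xrightarrow{\eta\sb{\bM}\otimes\Id}\F(\bM,\bM)\otimes\fX\lin{\bM}\xrightarrow{\ev}\fX\lin{\bM}$ is the canonical isomorphism). The second reduces to the characterization of a spectral natural transformation $\psi$ out of a representable: its entire family $\{\psi\sb{\bN}\}$ is determined by $\psi\sb{\bM}(\eta\sb{\bM})$ via the evaluation map, because the structure map of $\fME\bM$ is essentially the composition pairing $\F(\bM,\bN)\otimes\F(\bM,\bM)\to\F(\bM,\bN)$ whose restriction along $\eta\sb{\bM}$ is the identity. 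Neither check involves anything beyond unwinding \eqref{eqfunmap} and the definition of a spectral functor; the main (mild) obstacle is simply to keep track of the associativity/unit diagrams in the $\otimes$--$\F$ adjunction.

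For the ``in particular'' statement with $\fX=\fME\bY$, one has $\fX\lin{\bM}=\F(\bY,\bM)$, and the second isomorphism $\Sp(\bS{0},\F(\bY,\bM))\cong\Sp(\bY,\bM)$ is immediate from \eqref{eqfunmap} applied to $\bS{0}\otimes\bY\cong\bY$.
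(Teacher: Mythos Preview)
Your proof is correct and takes essentially the same approach as the paper: the paper simply invokes the enriched Yoneda Lemma (citing \cite[Proposition 2.4]{Kel82}), whereas you have written out the standard proof of that result explicitly in the case at hand. The construction of $\Phi$ and $\Psi$ via the unit $\eta\sb{\bM}$ and the evaluation map, together with the unit/associativity verifications, is precisely the content of the enriched Yoneda argument being cited.
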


\begin{proof}
This follows from the enriched Yoneda Lemma (see \cite[Proposition 2.4]{Kel82}).
\end{proof}

\begin{remark}\label{rhtpyfun}
Any \ww{\TE}-spectral functor $\fX$ which is a  homotopy functor preserves homotopy
  pullbacks and pushouts (which are equivalent in \w[),]{\Sp} by
  \cite[Proposition 4.1]{Cho16}. In particular, it preserves the path, loop, and
  suspension operations on spectra, up to weak equivalence. Thus
\begin{myeq}\label{eqomegapi}
  \pi\sb{k}\fX\lin{\bM}~\cong~\pi\sb{0}\Omega\sp{k}\fX\lin{\bM}~\cong~
  \pi\sb{0}\fX\lin{\Omega\sp{k}\bM}~.
\end{myeq}
\noindent for any \w[.]{\bM\in\TE}
\end{remark}

\begin{mysubsection}{Truncation of spectral functors}
\label{window}
For each \w[,]{n\in\ZZ} consider the Postnikov section functor
\w{\PO{n}:\Sp \to \Sp} (localization with respect to \w[),]{\bS{n+1}\to\ast}
killing all homotopy groups in dimensions \w[,]{>n} and the \wwb{k-1}connected cover
\w{\lra{k}:\Sp \to \Sp} (colocalization with respect to \w{\ast\to \bS{k+1}}
\wwh see \cite[1.2 \& 5.1]{Hir03}).
When \w[,]{n\geq k} write \w{\PNK{n}{k}} for the composite \w[.]{\PO{n}\circ\lra{k}}

Note that for any \w{\bX,\bY\in\Sp} we have \w{\F(\bX,\bY)\sb{0}=\map\sb{\Sp}(\bX,\bY)}
(the simplicial enrichment), by \wref[.]{eqfunsp}
For any \w[,]{n\geq 0} we may define \w{\Pnk{n}{0}} in
\w{\Spaces} or \w{\Spa} by composing the \wwb{n+1}coskeleton functor with a
functorial fibrant replacement commuting with products, so it is monoidal in \w{\Spa}
with respect to cartesian products (see \cite[9.1.14]{Hir03}), with
\w[.]{[\PNK{n}{0}\F(\bX,\bY)]\sb{0}\simeq\Pnk{n}{0}\map\sb{\Sp}(\bX,\bY)}
%We may similarly define \w{\Pnk{n}{k}} on \w{\Spa} for any
%\w[,]{n\geq k\geq 0} since connected covers are monoidal, but we shall not require this.

We can therefore define a new enrichment on \w{\Sp} in \w{(\Spa,\times)} by
$$
\umap(\bX,\bY)~:=~\Pnk{n}{0}\F(\bX,\bY)\sb{0}~,
$$
and call any \w{\fX:\TE\to\Sp} respecting this enrichment a \emph{truncated}
\ww{\TE}-spectral functor, and their category will be denoted by \w[.]{\STn{n}}
In particular, applying \w{\Pnk{n}{0}} to any
spectral functor $\fX$ yields such a functor, defined by
\w[,]{(\Pnk{n}{0}\fX)\lin{\bM}:=\Pnk{n}{0}((\fX\lin{\bM})\sb{0})} which we call
simply the \emph{$n$-truncation} of $\fX$:
explicitly, the action of \w{\TE} on $\fX$, in the form of maps
of spectra
$$
\fX\lin{\bM}\wedge\F(\bM,\bN)\to\fX\lin{\bN}~,
$$
\noindent yields a map of simplicial sets
\w[,]{(\fX\lin{\bM})\sb{0}\wedge\F(\bM,\bN)\sb{0}\to(\fX\lin{\bN})\sb{0}}
and by precomposing with the quotient map
\w{(\fX\lin{\bM})\sb{0}\times\F(\bM,\bN)\sb{0}\to
  (\fX\lin{\bM})\sb{0}\wedge\F(\bM,\bN)\sb{0}} and applying our monoidal \w{\Pnk{n}{0}}
we obtain an action of \w{\TE} with its new enrichment:
\begin{myeq}\label{eqaction}
(\Pnk{n}{0}\fX)\lin{\bM}\times\umap(\bM,\bN)\to(\Pnk{n}{0}\fX)\lin{\bN}~.
\end{myeq}

Note that \w{\Pnk{n}{0}\fX} is not itself a spectral functor in the sense
of \S\ref{mapalg}. However, we still have the same notion of weak equivalences
in \w{\STn{n}} \wwh namely, natural transformations inducing weak equivalences for each
\w[.]{\bM\in\TE}

For \w[,]{\bM\in\TE} we say that \w{\Pnk{n}{0}\fME\bM} is a \emph{free} truncated
\ww{\TE}-spectral functor, since we have the following analogue of Lemma \ref{yoneda}:
\end{mysubsection}

\begin{lemma}\label{lyoneda}
If \w{\fX\in\STn{n}} and \w[,]{\bM\in\TE} there is a
natural isomorphism
$$
\STn{n}(\Pnk{n}{0}\fME\bM,\,\fX)~\cong~\Sp(\bS{0},\,\fX\lin{\bM})~.
$$
\end{lemma}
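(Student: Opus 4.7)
My plan is to mimic the proof of Lemma \ref{yoneda} by invoking the enriched Yoneda lemma \cite[Proposition 2.4]{Kel82}, but now in the $(\Spa,\times)$-enriched setting given by $\umap$ rather than the $\Sp$-enriched setting used there.

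First, I would identify the source. Directly from the definitions of $\fME\bM$ and of the $n$-truncation in \S\ref{window}, for each $\bN\in\TE$ one has
$$(\Pnk{n}{0}\fME\bM)\lin{\bN}\;=\;\Pnk{n}{0}\F(\bM,\bN)\sb{0}\;=\;\umap(\bM,\bN),$$
so $\Pnk{n}{0}\fME\bM$ is precisely the $\Spa$-enriched representable $\umap(\bM,-)$ on the $\umap$-enriched category $\TE$. Likewise, for $\fX\in\STn{n}$, passing to $\Pnk{n}{0}(\fX\lin{-})\sb{0}$ yields a $\Spa$-enriched functor $\TE\to\Spa$, and by the very definition of the truncated enrichment a morphism in $\STn{n}$ from $\Pnk{n}{0}\fME\bM$ to $\fX$ is the same as a $\Spa$-enriched natural transformation between these underlying $\Spa$-functors.

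Next, I would apply enriched Yoneda to produce a natural bijection between such $\Spa$-natural transformations $\umap(\bM,-)\to\Pnk{n}{0}(\fX\lin{-})\sb{0}$ and vertices of the pointed simplicial set $\Pnk{n}{0}(\fX\lin{\bM})\sb{0}$. Since $\Pnk{n}{0}$ on pointed spaces is a functorial fibrant replacement composed with an $(n+1)$-coskeleton, its effect on the $0$-simplex set is harmless, so one identifies this vertex set with that of $(\fX\lin{\bM})\sb{0}$, which is $\Sp(\bS{0},\fX\lin{\bM})$ by the standard identification for symmetric spectra. Explicitly, a natural transformation $\eta$ is sent to the image under $\eta\sb{\bM}$ of the vertex representing $\Id\sb{\bM}$; conversely, a vertex $f:\bS{0}\to\fX\lin{\bM}$ produces the transformation
$$\eta\sb{\bN}:\;\umap(\bM,\bN)\;\xrightarrow{f\times\Id}\;(\Pnk{n}{0}\fX)\lin{\bM}\times\umap(\bM,\bN)\;\longrightarrow\;(\Pnk{n}{0}\fX)\lin{\bN}$$
using the action (\ref{eqaction}), with naturality in $\fX$ and $\bM$ immediate from the construction.

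The one substantive point to check is that $\TE$ really becomes a \emph{strict} $(\Spa,\times)$-enriched category under $\umap$, so that the enriched Yoneda lemma applies on the nose. This reduces to the fact that $\Pnk{n}{0}$ on pointed spaces is monoidal with respect to cartesian products, as noted in \S\ref{window} following \cite[9.1.14]{Hir03}: this converts the strictly associative and unital composition on $\F(-,-)\sb{0}$ into a strictly associative and unital composition on $\umap$. Once this is in hand, the argument is entirely formal and parallels the proof of Lemma \ref{yoneda}.
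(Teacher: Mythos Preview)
The paper supplies no proof of this lemma: it is stated immediately after introducing free truncated spectral functors as the evident analogue of Lemma~\ref{yoneda}, whose one-line proof invokes the enriched Yoneda lemma. Your proposal carries out precisely that analogue in the $(\Spa,\times)$-enriched setting, and is the intended argument.

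One small point worth tightening: your claim that the fibrant replacement in $\Pnk{n}{0}$ ``is harmless'' on $0$-simplices is not automatic for an arbitrary functorial fibrant replacement. It does hold for $\mathrm{Ex}^{\infty}$, which also commutes with finite products (as required in \S\ref{window}), so with that choice the identification of the vertex set of $\Pnk{n}{0}(\fX\lin{\bM})_{0}$ with $\Sp(\bS{0},\fX\lin{\bM})$ goes through exactly as you say.
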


\begin{remark}
Since any \ww{\TE}-spectral homotopy functor $\fX$ commutes up to weak equivalence
with $\Omega$, we have
\begin{myeq}\label{eqtruncwe}
\Pnk{n}{0}\fX\lin{\Omega\sp{k}\bM}~\stackrel{\we}{\simeq}~
\Pnk{n}{0}\Omega\sp{k}\fX\lin{\bM}~\stackrel{\we}{\simeq}~
\Omega\sp{k}(\PNK{n+k}{k}\fX)\lin{\bM})~.
\end{myeq}
\noindent Therefore, \w{\Pnk{n}{0}\fX} determines \w{\PNK{n+k}{k}\fX}
up to homotopy for all \w[.]{k\in\ZZ}
\end{remark}

\begin{mysubsection}{Model category structures}
\label{smcs}
By \cite{HSS}, \w{\Sp} has a proper simplicial model category structure, and
by \cite{MMSS01}, it is cofibrantly generated.
Since \w{\TE} is small, by \cite[Theorems 11.1.6 \& 13.1.14]{Hir03} there is
a projective proper simplicial model category structure on \w[,]{\ST} in which
the weak equivalences and fibrations are level-wise \wh in particular, a
map\w{\ff:\fX \to \fX'} in \w{\ST} is a weak equivalence if and only
if \w{\ff\sb{\ast}:\fX\lin{\bM} \to \fX'\lin{\bM}} is a weak equivalence in
\w{\Sp} for each \w[.]{\bM \in \TE}

We may similarly define a \ww{\Pnk{n}{0}}-\emph{weak equivalence} of spectral functors
to be a map \w{\ff:\fX\to\fY} inducing a weak equivalence after applying
\w[.]{\Pnk{n}{0}} Of course, for homotopy spectral functors these are the same those
just defined, by \wref[,]{eqtruncwe} but in general they are different. By applying
Bousfield (co)localization to the above we obtain the
\ww{\Pnk{n}{0}}-\emph{model structure on} \w{\ST} (see \cite[Ch.\ 3]{Hir03}).
\end{mysubsection}

\begin{prop}\label{pmodelcat}
The \ww{\Pnk{n}{0}}-model category structure on \w{\ST} is right proper.
\end{prop}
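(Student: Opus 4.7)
The plan is to verify right properness of the $\Pnk{n}{0}$-model structure directly, by showing that every pullback of a $\Pnk{n}{0}$-weak equivalence along a $\Pnk{n}{0}$-fibration is again a $\Pnk{n}{0}$-weak equivalence. Since $\Pnk{n}{0}$-weak equivalences and pullbacks in $\ST$ are computed levelwise, I would first reduce to the corresponding statement at each object $\bM\in\TE$ in $\Sp$, using the right properness of $\Sp$ (from \cite[Theorem 3.4.4]{HSS}) as a starting point.

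The key input is the standard characterization of fibrations in a left Bousfield localization (see, e.g., \cite[Prop.\ 3.4.20]{Hir03}): writing $L:=\Pnk{n}{0}$, a $\Pnk{n}{0}$-fibration $\ff:\fX\to\fY$ in $\Sp$ is a stable fibration satisfying the ``local Cartesian'' property that the rectification square
\[\xymatrix{
\fX \ar[r] \ar[d]\sb{\ff} & L\fX \ar[d] \\
\fY \ar[r] & L\fY
}\]
is a homotopy pullback in $\Sp$. Granting this, one has $\fX\simeq L\fX\times^{h}_{L\fY}\fY$, and so the original pullback $\fW=\fX\times^{h}_{\fY}\fZ$ simplifies to $\fW\simeq L\fX\times^{h}_{L\fY}\fZ$.

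I would then apply $L$ and use the reflectivity of the $L$-local subcategory. Since $L$-local spectra form a reflective subcategory of $\Sp$ closed under homotopy limits, the homotopy pullback $L\fX\times^{h}_{L\fY}L\fZ$ is itself $L$-local. The universal property of localization gives $\map(\fU,\fZ)\simeq\map(\fU,L\fZ)$ for any $L$-local $\fU$, which forces $L(L\fX\times^{h}_{L\fY}\fZ)\simeq L\fX\times^{h}_{L\fY}L\fZ$ by testing against $L$-local objects. Since $L\fg:L\fZ\to L\fY$ is a weak equivalence, the right-hand side is equivalent to $L\fX$, yielding $L\fW\simeq L\fX$: i.e., $\fW\to\fX$ is a $\Pnk{n}{0}$-weak equivalence, as required.

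The main obstacle is cleanly establishing the rectification-square characterization of $\Pnk{n}{0}$-fibrations in this setting, since the $\Pnk{n}{0}$-structure arises from both a Postnikov localization and a connective colocalization, and the standard Bousfield results apply to each piece separately; one must also check that the level-wise reduction is compatible with the projective structure. Once the characterization is in hand, the rest of the argument is a formal manipulation using reflectivity of the $L$-local subcategory of $\Sp$.
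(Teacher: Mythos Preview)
Your rectification-square argument is essentially the content of Bousfield's right properness argument for nullification model structures (the paper cites \cite[Theorem 9.9]{Bou01} for exactly this), and it does establish right properness of the $\Po{n}$-localized structure. The gap is in the final step: you propose to run the argument with $L=\Pnk{n}{0}$ and appeal to ``reflectivity of the $L$-local subcategory,'' but this fails for the composite. The objects singled out by $\Pnk{n}{0}$ are those which are both $n$-truncated \emph{and} connective, and connective spectra are not closed under homotopy pullbacks (the fiber of a map of connective spectra can have nontrivial $\pi_{-1}$). So $L\fX\times^{h}_{L\fY}L\fZ$ need not be $L$-local, and the universal property you invoke, $\map(\fU,\fZ)\simeq\map(\fU,L\fZ)$ for $L$-local $\fU$, is the reflective half only; the connected-cover piece is \emph{co}reflective, where the analogous identity goes the wrong way for your pullback manipulation. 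Your self-identified ``main obstacle'' is therefore misplaced: even granting the characterization of $\Pnk{n}{0}$-fibrations (which, since right Bousfield localization leaves fibrations unchanged, are just the $\Po{n}$-fibrations, so the rectification square involves $\Po{n}$ rather than $\Pnk{n}{0}$), the formal reflectivity step still does not go through.

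The paper avoids this by treating the two pieces separately: first, the $\Po{n}$-nullification is shown to be right proper (your argument, or \cite[Theorem 9.9]{Bou01}); second, the connected-cover colocalization preserves right properness. The latter is a general fact about right Bousfield localization of a right proper model category: fibrations are unchanged, and a $K$-colocal equivalence $\fg$ is detected by $\map(A,-)$ for cofibrant $A\in K$; since $\map(A,-)$ preserves pullbacks and fibrations, right properness of the underlying category gives that the pullback of $\fg$ along a fibration is again a $K$-colocal equivalence. You need this second step (or an equivalent) to close the argument; the reflectivity manoeuvre cannot substitute for it.
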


\begin{proof}
The Postnikov section functor \w{\Po{n}} is a nullification, so a left Bousfield
localization. Hence, by \cite[Proposition 3.4.4]{Hir03} we have a left proper model
structure on the image of \w{\Po{n}} in \w[.]{\ST} The argument of
\cite[Theorem 9.9]{Bou01} (which also works in \w[)]{\Sp} shows that it is also
right proper. Since taking connected covers is a right Bousfield localization, by
\cite[\textit{loc.\ cit.}]{Hir03} we see that \w{\STn{n}} is right proper.
\end{proof}

\begin{mysubsection}{Homotopy groups}
  \label{shg}
The homotopy groups \w{\pi\sb{i}\fX\lin{\bM}} are used to define weak
  equivalences for a spectral functor $\fX$, and we will need to identify the minimal
  information needed to determine them. In fact, by \wref{eqomegapi} we need only the
  $0$-th (stable) homotopy group, if $\fX$ is a homotopy functor.

Since any spectrum $\bB$ is a homotopy group object, with group operation
\w{\mu:\bB \times\bB\to\bB} and inverse \w[,]{\nu:\bB \to\bB}
for any \w{\bA\in\Sp} we have
\w{\mu\sb{\ast}: \Sp(\bA, \bB) \times \Sp(\bA,\bB) \to \Sp(\bA,\bB)}
and \w[.]{\nu\sb{\ast}:\Sp(\bA,\bB) \to\Sp(\bA,\bB)} As by \wref[,]{eqfunmap}
\w[,]{\Sp(\bA,\bB)=\Sp(\bS{0},\F(\bA,\bB))} we may define a relation
 $\sim$ on \w{\Sp(\bA,\bB)} by \w{f \sim g} if and only if there
  exists \w{F\in\Sp(\bS{0},P\F(\bA,\bB))} such that \w[,]{\mu_*(\nu _*(g), f) = p\sb{\ast}F}
where \w{p:P\bX\to\bX} is the path fibration. We then see:
\end{mysubsection}

\begin{lemma}\label{lerel}
If \w{\bA\in\Sp} is cofibrant and \w{\bB\in\Sp} is fibrant, the relation $\sim$ is
an equivalence relation on \w{\Sp(\bA,\bB)} which coincides with
the (left or right) homotopy relation on \w[,]{\Sp(\bA,\bB)} which we denote
by $\simeq$.
\end{lemma}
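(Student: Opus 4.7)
My approach is to identify \w{\sim} with the standard (left/right) homotopy relation \w{\simeq} on \w[,]{\Sp(\bA,\bB)} which is already known to be an equivalence relation in this setting by \cite[Prop.~7.4.5]{Hir03}. Under the adjunction \wref[,]{eqfunmap} each \w{f:\bA\to\bB} corresponds to a point \w[,]{\tilde f:\bS{0}\to\F(\bA,\bB)} and the operations \w{\mu\sb{\ast},\nu\sb{\ast}} become the pointwise multiplication and inverse coming from the group-object-up-to-homotopy structure on \w[.]{\F(\bA,\bB)} Since $\bA$ is cofibrant and $\bB$ is fibrant, the SM7 axiom of the simplicial monoidal model structure on \w{\Sp} ensures that \w{\F(\bA,\bB)} is itself fibrant; consequently \w{[\bA,\bB]=\pi\sb{0}\F(\bA,\bB)} carries a well-defined abelian group structure under which \w[.]{[\mu\sb{\ast}(\nu\sb{\ast}(g),f)]=[f]-[g]}

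The heart of the argument is to show that the defining condition for \w{f\sim g} \wh the existence of a lift \w{F:\bS{0}\to P\F(\bA,\bB)} of \w{\mu\sb{\ast}(\nu\sb{\ast}(g),f)} along the path fibration $p$ \wh is equivalent to \w{\mu\sb{\ast}(\nu\sb{\ast}(g),f)} being null-homotopic as a map of spectra \w[.]{\bS{0}\to\F(\bA,\bB)} This follows from the general interpretation of \w{P\F(\bA,\bB)=\F(\Sigma\sp{\infty}\Delta[1]\sb{+},\F(\bA,\bB))} as a path object and of $p$ as the associated path fibration in the simplicial model structure on \w[:]{\Sp} any null-homotopy supplies such a lift on the nose (using that \w{\bS{0}} is cofibrant and \w{\F(\bA,\bB)} is fibrant), and conversely any lift produces a null-homotopy by composing with an endpoint evaluation.

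Combining the two steps yields \w{f\sim g} iff \w{\mu\sb{\ast}(\nu\sb{\ast}(g),f)} is null-homotopic iff \w{[f]-[g]=0} in \w{[\bA,\bB]} iff \w[,]{f\simeq g} as required. The main technical subtlety is the null-homotopy-versus-lift correspondence in the second step, which rests on a careful treatment of basepoint conventions in \w{P\F(\bA,\bB)} and on the interplay between the function-spectrum adjunction and the simplicial enrichment; once this is pinned down, the equivalence-relation property of \w{\sim} is immediate from that of \w[.]{\simeq}
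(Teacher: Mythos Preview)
Your proposal is correct and follows essentially the same route as the paper: both arguments reduce \w{f\sim g} to the null-homotopy of \w{\mu\sb{\ast}(\nu\sb{\ast}(g),f)} via the path-object lift, and then use the homotopy group structure on \w{[\bA,\bB]} to identify this with \w[.]{f\simeq g} The only cosmetic difference is that the paper verifies directly that $\sim$ is an equivalence relation and spells out the converse as an explicit chain of homotopies \w[,]{g\simeq\mu\sb{\ast}(g,\ast)\simeq\dotsb\simeq f} whereas you deduce the equivalence-relation property from that of $\simeq$ and phrase the converse as \w{[f]-[g]=0} in \w[.]{\pi\sb{0}\F(\bA,\bB)}
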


As usual, we write \w{[\bA,\bB]} for \w[.]{\Sp(\bA,\bB)/\sim}

\begin{proof}
The fact that $\sim$ is an equivalence relation is readily verified.
Given two homotopic maps \w[,]{f \simeq g:\bA\to\bB} \w{\mu_*(\nu_*(g), f)} is
nullhomotopic, so there is \w{F:\bS{0}\to P\F(\bA,\bB)} with
\w[.]{\mu_*(\nu_*(g), f) = p\sb{\ast}F}
Conversely, given \w{F:\bS{0}\to P\F(\bA,\bB))} with
\w[,]{\mu_*(\nu_*(g), f) =p\sb{\ast}F} we see that \w{\mu_*(\nu_*(g), f)} is
nullhomotopic, so
\w[.]{g \simeq \mu_*(g,\ast)\simeq\mu_*(g, \mu_*(\nu_*(g), f))
  \simeq\mu_*(\mu_*(g , \nu_*(g)), f)\simeq\mu_*(\ast, f)\simeq f}
\end{proof}

\begin{remark}\label{rarrowset}
If we let \w{\bo=(0\to 1)} denote the one-arrow category, with a
single non-identity map, and \w{\Seta\sp{\bo}} the corresponding
functor category into pointed sets, we may define a functor
\w{\widehat{\rho}:\Sp\to\Seta\sp{\bo}}
by \w[,]{\bX\mapsto [\Sp(\bS{0},P\bX)\xra{p}\Sp(\bS{0},\bX)]} and deduce:
\end{remark}

\begin{cor}\label{grpstr}
For fibrant \w{\bB\in\Sp} the functor \w{\pi\sb{0}\F(-,\bB):\Sp\sb{\cof}\to\Gp} (on the
subcategory of cofibrant spectra) factors through \w[.]{\widehat{\rho}\circ\F(-,\bB)}
\end{cor}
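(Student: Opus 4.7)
The plan is to deduce this directly from Lemma \ref{lerel} by unwinding the definition of $\widehat{\rho}$. First I would use the adjunction \wref{eqfunmap} to identify $\Sp(\bS{0}, \F(\bA, \bB))$ with $\Sp(\bA, \bB)$, and $\Sp(\bS{0}, P\F(\bA, \bB))$ with $\Sp(\bA, P\bB)$, naturally in cofibrant $\bA$. Under these identifications, $\widehat{\rho}(\F(\bA, \bB))$ is the pointed-set arrow $p_\ast$ induced by the path fibration of the spectrum $\bB$. By Lemma \ref{lerel}, for cofibrant $\bA$ and fibrant $\bB$ the group $\pi\sb{0}\F(\bA, \bB) = [\bA, \bB]$ is the quotient $\Sp(\bA, \bB)/\sim$, where $\sim$ is the relation defined in \S\ref{shg}.

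The key step is to observe that $\sim$ can be read off from the arrow $\widehat{\rho}(\F(\bA, \bB))$ alone. Setting $g = 0$ in the defining condition $\mu_\ast(\nu_\ast(g), f) = p_\ast F$ yields $f \sim 0$ if and only if $f$ lies in $\im(p_\ast)$, so $\im(p_\ast)$ is precisely the null-homotopy class, that is, the kernel of the quotient map $\Sp(\bA, \bB) \to \pi\sb{0}\F(\bA, \bB)$. Since $\sim$ coincides with the two-sided homotopy relation (again by Lemma \ref{lerel}), which is a congruence with respect to the homotopy group operations $\mu_\ast, \nu_\ast$, the subset $\im(p_\ast)$ is a normal subgroup of $\Sp(\bA, \bB)$, and the quotient $\Sp(\bA, \bB)/\sim$ inherits its group structure from the homotopy group-object structure on the spectrum $\F(\bA, \bB)$. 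Functoriality in $\bA \in \Sp\sb{\cof}$ then follows from the naturality of $\widehat{\rho}$ and of the quotient-by-image construction.

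There is no serious obstacle: the statement is essentially a reorganization of Lemma \ref{lerel}. The only subtlety worth flagging is that $\widehat{\rho}$ records only the underlying pointed-set arrow, so the group structure on $\pi\sb{0}\F(\bA, \bB)$ must be supplied separately from the (universal) homotopy group-object structure on $\bB$; this is why the factorization takes values in $\Gp$ rather than merely $\Seta$.
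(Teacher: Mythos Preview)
Your proposal is correct and follows the same approach as the paper, which gives no explicit proof at all (the corollary is stated as an immediate consequence of Lemma \ref{lerel} and Remark \ref{rarrowset}). Your unpacking of the definition of \w{\widehat{\rho}} and the identification of \w{\im(p_\ast)} with the class of nullhomotopic maps is exactly the intended content.

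One small imprecision worth noting: you write that \w{\im(p_\ast)} is a ``normal subgroup of \w{\Sp(\bA,\bB)}'', but \w{\Sp(\bA,\bB)} is not a strict group, since $\bB$ is only a \emph{homotopy} group object (so \w{\mu_\ast} and \w{\nu_\ast} satisfy the group axioms only up to \w{\sim}). The cleaner way to phrase your argument is the one you already have implicitly: by definition \w{f\sim g} iff \w{\mu_\ast(\nu_\ast(g),f)\in\im(p_\ast)}, so the relation \w{\sim} is determined by \w{\im(p_\ast)} together with the fixed operations \w{\mu_\ast,\nu_\ast} coming from $\bB$; Lemma \ref{lerel} then guarantees that \w{\sim} is an equivalence relation and a congruence, so the quotient inherits a group structure. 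This avoids the need to speak of subgroups of a non-group.
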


%
%c3      Mapping algebras
%
\sect{Mapping algebras}
\label{cdsc}

We now show that \ww{\TE}-spectral functors $\fX$ having a certain property
(called \emph{mapping algebras}) are representable, up to weak equivalence.
To do so, in Section \ref{ccsass} we will construct a cosimplicial spectrum \w{\Wu}
using this structure, and show that \w{\Tot\Wu} realizes $\fX$, up to weak equivalence.

The discussion in \S \ref{shg} suggests the following:

\begin{mysubsection}{The arrow set category}\label{arrcat}
  For a fixed limit cardinal $\lambda$, with \w{\TE} as above, let \w{\Gamma\TE} denote
  the directed graph associated to the underlying category of \w{\TE}
  (see \cite{HasseG}). We then define an
  \emph{arrow set} $A$ to be a function \w{A:\Gamma\TE\to \Seta\sp{\bo}}
  (see \S \ref{rarrowset}) which assigns to each $\bM$ in \w{\TE}
  a map of pointed sets \w[,]{A(\chi \sb{\bM}):A(e\sb{\bM})\to A(b\sb{\bM})}
fitting into a commutative square
\mydiagram[\label{arrdiag}]{
A(e\sb{\bM})\ar[d]_{A(\chi \sb{\bM})} \ar[rr]^{A(e_j)} && A(e\sb{\bM'})\ar[d]^{A(\chi \sb{\bM'})}\\
    A(b\sb{\bM}) \ar[rr]^{A(b_j)} && A(b\sb{\bM'})
}
\noindent for each map \w{j: \bM \to \bM'} in \w[.]{\TE}

This is equivalent to having a functor from the free category on \w{\Gamma\TE}
to  \w[.]{\Seta\sp{\bo}} We denote the category of such arrow sets by \w[.]{\Xil}

For each fixed limit cardinal $\lambda$ we have a functor
\w[,]{\rho:\ST\to\Xil} where the arrow set \w{\rho(\fX)}
assigns to each map \w{j:\bM\to\bM'} in \w{\TE} the commutative square:
\mydiagram[\label{rho}]{
\Sp(\bS{0}, P\fX\lin{\bM})\ar[rr]^{p_*} \ar[d]^{P\fX\lin{j}} &&
  \Sp(\bS{0},\fX\lin{\bM})\ar[d]^{\fX\lin{j}} \\
\Sp(\bS{0}, P\fX\lin{\bM'})\ar[rr]^{p_*} && \Sp(\bS{0},\fX\lin{\bM'})
  }
The map \w{p_*} is induced by the path fibration \w{p\sb{\bY}:P\bY\to\bY}
for \w{\bY=\fX\lin{\bM}} (compare \S \ref{rarrowset}).
\end{mysubsection}

\begin{mysubsection}{Maps of arrow sets}\label{smaparr}
  Using \wref[,]{arrdiag} any \w{A, B \in \Xil} induce the following diagram:
\myrdiag[\label{mapsarrows}]{
  \Seta(A(e\sb{\bM}), B(e\sb{\bM}))\ar[d]^{B(\chi\sb{\bM})\sb{\ast}}
  \ar[rr]^{B(e\sb{j})\sb{\ast}}   &&
  \Seta(A(e\sb{\bM}), B(e\sb{\bM'})) \ar[d]^{B(\chi\sb{\bM'})_\ast}   &&
\Seta(A(e\sb{\bM'}), B(e\sb{\bM'})) \ar[ll]_{A(e\sb{j})\sp{\ast}} \ar[d]^{B(\chi\sb{\bM'})_\ast} \\
  \Seta(A(e\sb{\bM}), B(b\sb{\bM})) \ar[rr]^{B(b\sb{j})\sb{\ast}} &&
  \Seta(A(e\sb{\bM}), B(b\sb{\bM'})) &&
  \Seta(A(e\sb{\bM'}), B(b\sb{\bM'})) \ar[ll]_{A(e_j)\sp{\ast}} \\
  \Seta(A(b\sb{\bM}), B(b\sb{\bM})) \ar[u]_{A(\chi\sb{\bM})^\ast}
\ar[rr]^{B(b\sb{j})\sb{\ast}} && \Seta(A(b\sb{\bM}),B(b\sb{\bM'})) \ar[u]_{A(\chi\sb{\bM})^\ast} &&
  \Seta(A(b\sb{\bM'}), B(b\sb{\bM'}))\ar[u]_{A(\chi\sb{\bM'})^\ast} \ar[ll]_{A(b_j)^\ast}}
\noindent Thus \w{\Xil(A,B)} is a product over all maps \w{j: \bM \to \bM'} in \w{\TE}
of the limit of the diagrams \wref[.]{mapsarrows}
\end{mysubsection}

\begin{remark}\label{rarrowst}
  Our goal is to describe the minimal data needed to determine when a map of spectral
  functors \w{\ff:\fX\to\fY} is a weak equivalence (\S \ref{smcs}) \wh i.e.,
  assuming these are homotopy spectral functors,
  when \w{\ff\sb{\ast}:\pi\sb{0}\fX\lin{\bM}\to\pi\sb{0}\fY\lin{\bM}} is an
  isomorphism for all \w[.]{\bM\in\TE}

  By Corollary \ref{grpstr}, the map of arrow sets \w{\rho\ff:\rho \fX \to\rho \fY}
    suffices for this purpose: in fact, it is enough to consider its values only on
    the objects of \w{\TE} (i.e., the vertical arrows in \wref[.]{arrdiag}

The more complicated definition of arrow sets given above is necessary only for
the smallness argument in Section \ref{csma} below.  However, we do not require
that an arrow set be functorial with respect to the compositions in \w[,]{\TE} since
this is not needed for our purpose.
\end{remark}

\begin{notn}\label{cardA}
Let \w{\Xi:=\bigcup\sb{\lambda}\,\Xil} (the union taken over all limit cardinals).
This is a large category, which we need only
in order to be able to discuss all arrow sets at once.

In particular, for each arrow set \w[,]{A\in\Xi} let $\lambda$ be maximal such that
\w[,]{A\in\Xil} and write \w{\|A\|:=\sup\sb{\bM\in\TE}\{|A(\eM)|,\ |A(\abM)|\}}
(where \w{|B|} denotes the cardinality of a set $B$).
We write \w{\LE:\Sp\to\Xil\op} for \w[.]{\rho \circ\fME}
\end{notn}

\begin{mysubsection}{The Stover construction}
  \label{sstoverconst}
To describe the right adjoint \w{\RE:\Xil\op\to\Sp} to \w[,]{\LE} we recall a construction
  due to Stover (see \cite{Sto90} and compare \cite{BS18}):

  We want to have \w[.]{\Sp(\bY,\RE A) \cong \Xil(A,\LE\bY)} By the description of
  morphisms in \w{\Xil} (see \S \ref{arrcat}), it follows that the right hand side \wh
  that is, \w{\Xil(A,\LE\bY)} is the product over all \w{\bM \in \TE}
and \w{j :\bM \to \bM'} of the limit of the following diagram:

\myrdiag[\label{sa}]{
  \prod\limits_{A(\eM)}\Sp(\bY, P\bM)\ar[d]^{(p \sb{\bM})_\ast}
  \ar[rr]^{\prod\limits_{A(\eM)}(P_j)_\ast}   &&
  \prod\limits_{A(\eM)}\Sp(\bY, P\bM') \ar[d]^{(p \sb{\bM'})_\ast}   &&
  \prod\limits_{A(\peM)}\Sp(\bY, P\bM') \ar[ll]_{\top A(e_j)\sp{\ast}} \ar[d]^{(p \sb{\bM'})_\ast} \\
  \prod\limits_{A(\eM)}\Sp(\bY, \bM) \ar[rr]^{\prod\limits_{A(\eM)}(j)_\ast} &&
  \prod\limits_{A(\eM)}\Sp(\bY, \bM') &&
  \prod\limits_{A(\peM)}\Sp(\bY, \bM') \ar[ll]_{\top A(e_j)\sp{\ast}} \\
  \prod\limits_{A(\abM)}\Sp(\bY, \bM) \ar[u]_{A(\chi\sb{\bM})^\ast}
  \ar[rr]^{\prod\limits_{A(\abM)}(j)_\ast} && \prod\limits_{A(\abM)}\Sp(\bY, \bM')
  \ar[u]_{A(\chi\sb{\bM})^\ast} &&
  \prod\limits_{A(\pbM)}\Sp(\bY, \bM')\ar[u]_{A(\chi\sb{\bM'})^\ast} \ar[ll]_{A(b_j)^\ast}}

Note that \wref{sa} splits up as a product of smaller diagrams, indexed by a
single map \w{\phi:\bY\to\bM} in the left two slots of the bottom row. Moreover, this
diagram is really only relevant for nullhomotopic $\phi$.

Therefore, given \w{\ast\neq\phi \in A(\abM)} and \w[,]{j: \bM \to \bM'} we define
\w{\bQ^{(\bM,\phi, j)}} to be the limit of the following diagram:
\myrdiag[\label{dualstover}]{
  \prod\limits_{A(\chi \sb{\bM})^{-1}(\phi)}P\bM \ar[d]^{\prod p\sb{\bM}} \ar[rr]^{\prod Pj} &&
  \prod\limits_{A(\chi \sb{\bM})^{-1}(\phi)} P\bM' \ar[d]^{\prod p\sb{\bM'}}  &&
  \prod\limits_{A(\chi \sb{M'})^{-1}(A(b_j)(\phi))} P\bM'
  \ar[ll]_{\top A(e_j)\sb{\ast}}\ar[d]^{\prod p\sb{\bM'}} \\
  \prod\limits_{A(\chi \sb{\bM})^{-1}(\phi)} \bM \ar[rr]^{\prod j} &&
  \prod\limits_{A(\chi \sb{\bM})^{-1}(\phi)} \bM'  &&
  \prod\limits_{A(\chi \sb{\bM'})^{-1}(A(b_j)(\phi))} \bM' \ar[ll]_{\top A(e_j)\sb{\ast}}\\
      \bM \ar[u]_-{\diag}\ar[rr]^{j} && \bM' \ar[u]_-{\diag} &&
      \bM'\ar[u]_-{\diag}\ar@{=}[ll]}
\noindent Note that if \w[,]{\phi \in \im(A(\chi\sb{\bM}))} then
\w[.]{A(b_j)(\phi) \in \im(A(\chi\sb{\bM'}))} Thus if \w{A(b_j)(\phi)} is not in the
image of \w[,]{A(\chi \sb{M'})}
then all six products the in two top rows of \wref{dualstover} have empty indexing
sets, so \w[.]{\bQ^{(\bM,\phi, j)}=\bM}

Finally, in the special case where $\phi$ is actually the zero map
$\ast$, we set \w[.]{\bQ^{(\bM,\phi, j)}:=\prod\sb{A(\chi
\sb{\bM})^{-1}(\ast)}\Omega\bM}
\end{mysubsection}

\begin{defn}
For a fixed limit cardinal $\lambda$, a \emph{mapping algebra} is a spectral
  functor \w{\fX:\TE\to \Sp} preserving all limits of the form \wref{dualstover} in
  \w[.]{\TE} In particular, by an appropriate choice of arrow set, we see that
  such an $\fX$ preserves loops up to homotopy.
The category of all mapping algebras for $\lambda$ is
denoted by \w[.]{\MapE}

Note that any representable spectral functor \w{\fME\bY}
(see \S \ref{mapalg}) is necessarily a mapping algebra, since it preserves \emph{all}
limits in \w[,]{\ModE} and the diagram \wref{dualstover} is in fact in
\w{\TE\subset\ModE} (including the path fibrations $p$).
\end{defn}

From the discussion in \S \ref{sstoverconst} we conclude (as in
\cite[3.1.1 \& 4.1.1]{BS18}):

\begin{lemma}\label{lrightadj}
For a fixed limit cardinal $\lambda$, the right adjoint \w{\RE:\Xil\op\to\Sp} of
\w{\LE} is given on \w{A\in\Xil} by
\begin{myeq}\label{rightadj}
  \RE(A)~:=~\prod\limits_{\bM \in \TE} \prod\limits_{\phi \in A(b \sb{\bM})}
  \prod\limits_{j: \bM \to \bM'} \bQ^{(\bM,\phi, j)}~.
\end{myeq}
\end{lemma}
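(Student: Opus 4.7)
The plan is to verify the claimed adjunction \w{\Sp(\bY,\RE(A))\cong\Xil(A,\LE\bY)} directly, by reconciling two product-of-limits decompositions of the two sides. Section \ref{sstoverconst} has already exhibited \w{\Xil(A,\LE\bY)} as a product, over all \w{\bM\in\TE} and all \w[,]{j:\bM\to\bM'} of the limits of the diagrams \wref[;]{sa} and \w{\bQ^{(\bM,\phi,j)}} was constructed precisely so that applying \w{\Sp(\bY,-)} to \wref{dualstover} computes one \wwb{\bM,\phi,j}slice of this limit. Since \w{\Sp(\bY,-)} preserves all limits, applying it to \wref{rightadj} gives
\[
\Sp(\bY,\RE(A))~\cong~\prod\limits_{\bM\in\TE}\prod\limits_{\phi\in A(\abM)}\prod\limits_{j:\bM\to\bM'}\Sp(\bY,\bQ^{(\bM,\phi,j)})~,
\]
so the real content is to identify each factor \w{\Sp(\bY,\bQ^{(\bM,\phi,j)})} with the \wwb{\bM,\phi,j}slice of the limit of \wref[.]{sa}

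To carry out this identification, I would first show that \wref{sa} itself decomposes as a product indexed by \w[:]{\phi\in A(\abM)} each vertex of its bottom row is a product over \w{A(\abM)} or \w[,]{A(\pbM)} and the vertical map \w{A(\chi\sb{\bM})^{\ast}} together with \w{A(b\sb{j})\sb{\ast}} splits the middle and top rows into copies indexed by the fibres \w{A(\chi\sb{\bM})^{-1}(\phi)} and \w[,]{A(\chi\sb{\bM'})^{-1}(A(b\sb{j})(\phi))} respectively. After this rearrangement, the \wwb{\bM,\phi,j}summand is visibly the diagram obtained by applying \w{\Sp(\bY,-)} to \wref[,]{dualstover} whose limit is \w{\Sp(\bY,\bQ^{(\bM,\phi,j)})} by continuity of \w[.]{\Sp(\bY,-)}

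The step I expect to be the main obstacle is the careful handling of the two degenerate cases built into the definition of \w[.]{\bQ^{(\bM,\phi,j)}} When \w{A(b\sb{j})(\phi)} fails to lie in \w[,]{\im(A(\chi\sb{\bM'}))} the fibre over it is empty, so the right-hand column of \wref{dualstover} collapses and its limit reduces to \w[,]{\bM} matching the convention \w[.]{\bQ^{(\bM,\phi,j)}=\bM} When \w[,]{\phi=\ast} the vertical path-fibration constraint in \wref{sa} forces the middle-row components indexed by \w{A(\chi\sb{\bM})^{-1}(\ast)} to be nullhomotopic in a coherent way; by Lemma \ref{lerel}, such lifts through \w{p\sb{\bM}} are classified by maps into \w[,]{\Omega\bM} one for each element of \w[,]{A(\chi\sb{\bM})^{-1}(\ast)} so the \wwb{\bM,\ast,j}summand becomes \w[,]{\prod\sb{A(\chi\sb{\bM})^{-1}(\ast)}\Sp(\bY,\Omega\bM)} in agreement with the stipulation \w[.]{\bQ^{(\bM,\ast,j)}:=\prod\sb{A(\chi\sb{\bM})^{-1}(\ast)}\Omega\bM}

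Naturality of the resulting bijection in \w{\bY} and \w{A} is automatic, since every identification is assembled from the canonical isomorphism \w{\Sp(\bY,\prod\sb{i}\bM\sb{i})\cong\prod\sb{i}\Sp(\bY,\bM\sb{i})} and the universal property of limits; this establishes the required adjunction and hence the formula \wref{rightadj} for \w[.]{\RE(A)}
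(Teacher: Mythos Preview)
Your proposal is correct and follows exactly the approach of the paper, whose ``proof'' consists entirely of the preceding discussion in \S\ref{sstoverconst} (together with a reference to \cite{BS18}); you have simply spelled that discussion out in full. One small imprecision: in the \w{\phi=\ast} case you need not invoke Lemma~\ref{lerel}, since the identification is strict rather than up to homotopy \wh a lift of the zero map through \w{p\sb{\bM}:P\bM\to\bM} is literally a map into the fibre \w[.]{\Omega\bM=p\sb{\bM}\sp{-1}(\ast)}
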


\begin{remark}\label{rxi}
The limits \w{\bQ^{(\bM,\phi, j)}} of \wref{dualstover} and the products of
  \wref{rightadj} always exist in \w[,]{\ModE} but they may or may not be
  in \w[.]{\TE} However, if we let \w{\Arr\TE} denote the set of all morphisms (between
  any two objects) in \w[,]{\TE} with cardinality \w[,]{|\Arr\TE|} and set
  \w{\kappa:=\max\{|\Arr\TE|,\|A\|\sp{\lambda}\}}
(see \S \ref{cardA}), we see that \w{\RE(A)} is in \w{\Thel{\nu(A)}}
for \w[,]{\nu(A):=\kappa\sp{\kappa}} say.
\end{remark}

\begin{remark}\label{rmonad}
  Since \w{\RE} is right adjoint to \w[,]{\LE} we obtain a monad
  \w{\cTE: = \RE\circ \LE:\Sp\to\Sp} with unit
  \w{\eta = \widehat{\Id_{\LE}}:\Id \to \cTE} and multiplication
  \w[,]{\mu = \RE\circ\widetilde{\Id_{\cTE}} : \cTE \circ \cTE \to \cTE}
  as well as a comonad \w{\SE: = \LE\circ \RE} on \w[,]{\Xil\op}  with counit
  \w{\epsilon : = \widetilde{\Id_{\RE}} : \SE  \to\Id} and comultiplication
  \w{\delta : = \LE
    \circ \widehat{ \Id_{\RE}} : \SE \to \SE \circ \SE} (see \cite[\S 8.6.1]{Wei94} for
  an explanation of the notation).
  \end{remark}

\begin{defn}
  A \emph{coalgebra} over the comonad \w{\SE} is an object \w{A \in \Xil\op} equipped
  with a section \w{\zeta\sb{A}: A \to \SE A} of the counit \w[,]{\epsilon: \SE A \to A}
with \w[.]{\SE\zeta \circ \zeta = \delta_A \circ \zeta}
\end{defn}

\begin{prop}\label{coalgebra}
Assume given a limit cardinal $\lambda$ and a \ww{\TE}-mapping algebra $\fX$
which extends to a \ww{\Thel{\kappa}}-mapping algebra for
\w[,]{\kappa=\nu(\rho\fME\RE\rho\fX))} in the notation of \S \ref{rxi}.
Then the corresponding arrow set \w{\rho \fX} has a
natural coalgebra structure over \w[.]{\SE}
\end{prop}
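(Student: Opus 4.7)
Write $A := \rho\fX$. By Remark \ref{rxi}, $\RE A \in \Thel{\nu(A)}\subseteq\Thel{\kappa}$, so the hypothesis $\kappa = \nu(\rho\fME\RE\rho\fX)$ allows us to evaluate the extended mapping algebra at $\RE A$, producing a spectrum $\fX\lin{\RE A}$. The plan is to produce a canonical zero-simplex $x_0 \in \Sp(\bS{0},\,\fX\lin{\RE A})$, pass it through the (enriched) Yoneda Lemma \ref{yoneda} in the $\Thel{\kappa}$-setting to obtain a natural transformation $\tau: \fmel{\kappa}\RE A \to \fX$ of $\Thel{\kappa}$-spectral functors, restrict this transformation along $\TE \hookrightarrow \Thel{\kappa}$, and apply $\rho$. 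Since $\rho\fME\RE A = \LE\RE A = \SE A$ by definition, the result will be a morphism $\SE A \to A$ in $\Xil$, i.e.\ $\zeta_A : A \to \SE A$ in $\Xil\op$.

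\textbf{The canonical point $x_0$.} Because $\fX$ preserves the limits \wref{dualstover}, the spectrum $\fX\lin{\bQ^{(\bM,\phi,j)}}$ is itself the limit of the diagram obtained by applying $\fX$ to \wref{dualstover}. A zero-simplex of this limit amounts to a coherent system consisting of a point at the bottom corner of \wref{dualstover} and lifting paths at the top corners. The arrow set $A = \rho\fX$ supplies exactly this data: take $\phi \in A(\abM) = \Sp(\bS{0},\fX\lin{\bM})$ as the bottom-corner point of the $(\bM,\phi,j)$-block; for each $\psi \in A(\chi_{\bM})^{-1}(\phi) \subseteq A(\eM)$, take the element $\psi$ itself as the lifting path in $\Sp(\bS{0}, P\fX\lin{\bM})$; and observe that the compatibility with $j: \bM \to \bM'$ is precisely the commutative square \wref{arrdiag} built into the definition of an arrow set. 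Assembling these choices over all triples $(\bM,\phi,j)$, using preservation of the ambient product (which is exactly why the $\Thel{\kappa}$-extension is needed), produces the desired $x_0$.

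\textbf{Section and coassociativity.} That $\epsilon_A \circ \zeta_A = \Id_A$ comes out by direct unraveling: in $\Xil$, the counit $\epsilon_A$ sends each $\phi \in A(\abM)$ to the projection $\RE A \to \bM$ onto its $\phi$-indexed slot, and $(x_0)_\ast$ applied to this projection returns the $\phi$-component of $x_0$, which is $\phi$ by construction. Coassociativity $\SE\zeta_A \circ \zeta_A = \delta_A \circ \zeta_A$ is a formal consequence of naturality: under Yoneda and the triangle identities of the adjunction $\LE \dashv \RE$, both sides correspond to the same composite built out of $x_0$ and the adjunction unit $\eta_{\RE A}$. The main obstacle throughout is set-theoretic bookkeeping --- guaranteeing that $\RE A$ lies in a subcategory on which $\fX$ is legitimately a mapping algebra, which is exactly what the hypothesis on $\kappa$ secures via Remark \ref{rxi}; once that is in place, the construction and the coalgebra identities are forced by the universal properties.
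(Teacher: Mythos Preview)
Your strategy --- construct the tautological point $x_0$ using the limit-preservation hypothesis on $\fX$, pass through enriched Yoneda to obtain $\tau:\fME\RE A\to\fX$, and set $\zeta_A=(\rho\tau)^{\mathrm{op}}$ --- is exactly the paper's approach (its Steps 1--2 do precisely this, writing $\xi_\fX$ for your $\tau$ and describing $x_0$ as the map whose component at each index $f$ is $f$ itself). Your verification of the counit axiom is correct and in fact more explicit than the paper, which leaves that check implicit.

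The place where your argument is thinner than the paper's is coassociativity. The paper spends Steps 3--6 computing the two composites $\xi_\fX\circ\xi_{\VE\fX}$ and $\xi_\fX\circ\VE(\xi_\fX)$ explicitly: under Yoneda both become elements of $\fX\lin{\bL}=\lim_{g\in\JJ}\fX\lin{\hN(g)}$ (with $\bL=\RE\rho\VE\fX$), and the paper checks that the projection of each to every component $\fX\lin{\hN(g)}$ equals $g_\ast\,\xi_\fX(\Id_\bK)$. Your assertion that this is ``a formal consequence of naturality'' and the triangle identities glosses over real content: the two underlying spectrum maps $\bK\to\bL$ involved, namely $\eta_\bK$ and $\RE\rho\xi_\fX$, are in general \emph{distinct} (they are the two canonical maps $T\to T\sp{2}$ for the monad $T=\cTE$), and what must be verified is that pushing $x_0$ forward along either lands on the same point of $\fX\lin{\bL}$. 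That verification uses the tautological description of $x_0$ in an essential way and is not delivered by the adjunction triangles alone. One can indeed repackage the argument as naturality of $\xi$ in $\fX$ (coassociativity is then the naturality square at $\alpha=\xi_\fX$), but establishing that naturality still requires the componentwise unwinding the paper carries out; you have relocated the work rather than avoided it.
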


\begin{remark}\label{rcoalg}
The assumption clearly holds whenever $\fX$ is representable \wh but in this case
we already know that the arrow set \w{\LE \bY = \rho \fME\bY} has a coalgebra structure,
given by \w[.]{\zeta_{\LE \bY}=\LE(\eta)=\LE(\widehat{\Id_{\LE \bY}})}
\end{remark}

\begin{proof}
We want to construct a map \w{\zeta\sb{\rho \fX}} fitting into
  a commutative diagram
\mydiagram[\label{dualcoalg}]{
    \rho \fX \ar[d]_{\zeta_{\rho \fX}} \ar[rr]^{\zeta_{\rho \fX}} &&
  \SE(\rho \fX) \ar[d]^{\SE \zeta_{\rho \fX}}\\
 \SE(\rho \fX) \ar[rr]_{\zeta_{\SE(\rho \fX)}} &&
 \SE \SE(\rho \fX)}
 \noindent in \w{\Xi\op} (so all maps in $\Xi$ are in the opposite direction!)

 Since \w[,]{\SE(\rho \fX)= \rho \fME\RE (\rho \fX)} all objects in \wref{dualcoalg} are
 in the image of $\rho$, so it suffices to produce a map
 \w{\xi_{\fX}:\VE\fX=\fME\RE\rho \fX \to \fX} fitting into a commutative diagram:
\mydiagram[\label{eqalg}]{
  \VE \VE\fX \ar[d]_{\xi_{\VE\fX}} \ar[rr]^{\VE(\xi_\fX)} && \VE\fX\ar[d]^{\xi_\fX} \\
  \VE\fX \ar[rr]_{\xi_\fX} & & \fX}
\noindent in \w[,]{\ST} and then set
\w[\vsn.]{\zeta_{\rho \fX} =(\rho\xi\sb{\fX})\op}

 \noindent\textbf{Step 1.}\hsm
If we let \w[,]{\bK:=\RE \rho \fX} by \wref{rightadj} we have
\begin{myeq}\label{eqre}
\bK~=~\prod\limits_{\bM \in \TE} \prod\limits_{\phi \in \Sp(\bS{0}, \fX\{ \bM\})}
 \prod\limits\sb{j: \bM \to \bM'} \bQ\sp{(\bM, \phi, j)}
\end{myeq}
\noindent which is in \w[.]{\Thel{\kappa}} Thus we have an indexing category
$$
 \II~=~\coprod\limits_{\bM \in \TE} \coprod\limits_{\phi \in \Sp(\bS{0}, \fX\{ \bM\})}
 \coprod\limits\sb{j: \bM \to \bM'}\ \II\sp{(\bM, \phi, j)}
$$
 \noindent (depending on $\fX$), and functors
 \w{\hP\sp{(\bM, \phi, j)}:\II\sp{(\bM, \phi, j)}\to\TE} such that
 \w{\lim \hP\sp{(\bM, \phi, j)}=\bQ\sp{(\bM, \phi, j)}} as in \wref[.]{dualstover}

We can describe the indexing category \w{\II\sp{(\bM, \phi, j)}} by:
\myrdiag[\label{ddualstover}]{
  \coprod\limits_{\Phi\in \chi^{-1}(\phi)} (\Phi) \ar[d]^{\coprod \pi\sb{\Phi}}
  \ar[rr]^{\coprod (\gamma\sb{j})} &&
  \coprod\limits_{\Phi\in \chi^{-1}(\phi)} (\Phi)' \ar[d]^{\coprod \pi'\sb{\Phi}}  &&
  \coprod\limits_{\Psi\in(\chi')^{-1}(b\sb{j}(\phi))} (\Psi) \ar[d]^{\coprod \pi'\sb{\Psi}}
  \ar[ll]_{\bot (e\sb{j}\sp{\ast})} \\
  \coprod\limits_{\Phi\in \chi^{-1}(\phi)} (\Phi\sb{b}) \ar[rr]^{\coprod (\delta\sb{j})} &&
  \coprod\limits_{\Phi\in \chi^{-1}(\phi)} (\Phi\sb{b})' &&
  \coprod\limits_{\Psi\in(\chi')^{-1}(b\sb{j}(\phi))} (\Psi\sb{b})
  \ar[ll]_{\bot (e\sb{j})} \\
  (b) \ar[u]_-{\diag}\ar[rr]^{b\sb{j}}&& (b') \ar[u]^-{\diag} \ar[urr]_-{\diag} &&
  }
where \w{\coprod_{s\in S}\,(s)} is a discrete subcategory with object set $S$, and
\w{\diag:(b)\to\coprod_{s\in S}\,(s)} means that there is a single arrow from \w{(b)}
to each \w[.]{(s)}

The notation \w[,]{(\Phi)'} and so on, is intended to distinguish objects in
different discrete categories with the same set of indices \w[.]{\chi^{-1}(\phi)}
The notation \w{\coprod \pi\sb{\Phi}} for a map between such categories means that
each object \w{(\Phi)} in the upper left corner maps to the
corresponding \w{(\Phi\sb{b})} beneath it.
The reader should keep in mind the motivating functor from \wref{ddualstover} to
\w[,]{\Sp} described in \wref[.]{sa}

The somewhat nonstandard notation
$$
\bot(e\sb{j}\sp{\ast}):\coprod\limits_{\Psi\in(\chi')^{-1}(b\sb{j}(\phi))}\ (\Psi)~\to~
\coprod\limits_{\Phi\in \chi^{-1}(\phi)}\ (\Phi)'
$$
\noindent means that if \w{\Psi=e\sb{j}(\Phi)} then \w{(\Psi)} is sent to
\w{(\Phi)} in the second discrete subcategory.

The functor \w{\hP=\hP\sp{(\bM, \phi, j)}:\II\sp{(\bM, \phi, j)}\to\TE} is described
implicitly by \wref[:]{dualstover} thus \w{\hP((\Phi))=P\bM} for each
\w[,]{\Phi\in \chi^{-1}(\phi)} and so on.
The top right left-facing arrow in \wref{dualstover} maps into the copy of
\w{P\bM'} indexed by $\Phi$ (in the top central product) by projecting the product
in the top right onto the factor \w{P\bM'} indexed by \w[.]{e\sb{j}(\Psi)}

The functors \w{\hP\sp{(\bM, \phi, j)}} fit together to define
\w[,]{\hP:\II\to\TE} with
\w[\vsn.]{\bK~=~\lim\sb{f\in\II}\,\hP(f)}

\noindent\textbf{Step 2.}\hsm
To define the map \w[,]{\xi\sb{\fX}:\VE\fX\to\fX} note that since
\w[,]{\VE\fX=\fME\bK}  by Lemma \ref{yoneda} \w{\xi\sb{\fX}} should correspond to
 the value of \w{\xi\sb{\fX}(\Id\sb{\bK})} in \w[.]{\Sp(\bS{0},\fX\lin{\bK})} But
 \w[,]{\fX\lin{\bK}=\fX\lin{\lim\sb{f\in\II}\,\hP(f)}=\lim\sb{f\in\II}\fX\lin{\hP(f)}}
 because the mapping algebra $\fX$ commutes by definition with the limits  in
 \wref[.]{eqre} Thus we may define \w{\xi\sb{\fX}(\Id\sb{\bK})} to be the
 tautological map whose values at \w{\fX\lin{\hP(f)}} is $f$ itself\vsn.

 \noindent\textbf{Step 3.}\hsm
 A similar calculation shows that \w{\bL:=\RE \rho \VE\fX}
 is a limit of a functor \w[,]{\hN=\hN\sb{\VE\fX}:\JJ\to\TE}
 but in this case the
 indexing category $\JJ$ can be described somewhat more explicitly because
 \w{\VE\fX=\rho\fME\bK} is also representable. Thus
\begin{myeq}\label{eqrell}
\bL~=~\prod\limits_{\bM \in \TE} \prod\limits_{\phi:\bK\to\bM}
 \prod\limits\sb{j: \bM \to \bM'} \hN \sp{(\bM, \phi, j)}
\end{myeq}
\noindent which again is in \w[.]{\Thel{\kappa}} Therefore,
\w[,]{\JJ~=~\coprod\limits_{\bM \in \TE} \coprod\limits_{\phi:\bK\to\bM}
 \coprod\limits\sb{j: \bM \to \bM'}\ \JJ\sp{(\bM, \phi, j)}}
where \w{\JJ\sp{(\bM, \phi, j)}} defined analogously to \wref[,]{ddualstover}
and thus the factor \w{\hN \sp{(\bM, \phi, j)}} in \wref{eqrell} (for nullhomotopic
\w[)]{\phi:\bK\to\bM} is the limit of the diagram:
\myrdiag[\label{dualstoverj}]{
  \prod\limits_{\Phi:\phi\sim\ast}\ P\bM \ar[d]^{\prod p\sb{\bM}} \ar[rr]^{\prod Pj} &&
  \prod\limits_{\Phi:\phi\sim\ast} P\bM' \ar[d]^{\prod p\sb{\bM'}}  &&
  \prod\limits_{\Psi:j\circ \phi\sim\ast} P\bM'
  \ar[ll]_{\top \proj_{j\circ\Phi}}\ar[d]^{\prod p\sb{\bM'}} \\
  \prod\limits_{\Phi:\phi\sim\ast} \bM \ar[rr]^{\prod j} &&
  \prod\limits_{\Phi:\phi\sim\ast} \bM'  &&
  \prod\limits_{\Psi:j\circ \phi\sim\ast} \bM' \ar[ll]_{\top \proj_{j\circ\Phi}}\\
  \bM \ar[u]_-{\diag}\ar[rr]^{j} && \bM' \ar[u]^-{\diag} \ar[urr]_-{\diag} &&
  }
\noindent where we have already taken the limits over the discrete subcategories of
\w[.]{\JJ\sp{(\bM, \phi, j)}}

Note that the objects of \w{\JJ\sp{(\bM, \phi, j)}} are actual spectrum maps $g$ from
$\bK$ into the value of $\hN$ at this object, namely \w[,]{\hN(g)} which is
always one of \w[.]{\{\bM,\bM',P\bM,P\bM'\}}
The map of mapping algebras
\w{\xi\sb{\VE\fX}~=~\fME\eta\sb{\bK}~:~\fME\bL~\to~\fME\bK}
(see Remark \ref{rcoalg}) corresponds under Lemma \ref{yoneda} to the tautological map
 \w{\eta\sb{\bK}:\bK\to\lim\sb{\JJ}\hN} which sends $\bK$ into \w{\hN(g)} by $g$
 itself\vsn.

\noindent\textbf{Step 4.}\hsm
Similarly, the composite \w{\xi\sb\fX\circ \xi\sb{\VE \fX}~:~\VE \VE\fX~\to~\fX}
corresponds under Lemma \ref{yoneda} to the value of
 \w{\xi\sb\fX\circ \xi\sb{\VE \fX}(\Id\sb{\bL})} as a spectrum map
 \w[,]{\bS{0}\to\fX\lin{\bL}} where again
\w[.]{\fX\lin{\bL}~=~\fX\lin{\lim\sb{g\in\JJ}\,\hN(g)}~=~\lim\sb{g\in\JJ}\fX\lin{\hN(g)}}
Since we are mapping into a limit, this is uniquely determined by
the map \w{\bS{0}\to\fX\lin{\hN(g)}} for various \w[,]{g\in\Obj(\JJ)}
given by \w[\vsn.]{g\sb{\ast}\xi\sb{\fX}(\Id\sb{\bK})}

\noindent\textbf{Step 5.}\hsm
By definition, the map \w{\VE(\xi\sb{\fX})=\fME\RE \rho \xi\sb{\fX}~:~\VE \VE\fX~\to~\VE\fX}
is induced by
\begin{myeq}\label{eqvarphi}
\RE \rho \xi\sb{\fX}~=~\bK~\xra{\varphi}~\cTE\bK~=~\bL~=~\lim\sb{g\in\JJ}\,\hN(g)~.
\end{myeq}
Again, we are mapping into a limit, so this is uniquely determined by
maps \w{\varphi\sb{g}:\bK\to\hN(g)} for various \w[.]{g\in\Obj(\JJ)}
However \w[,]{\bK=\lim\sb{f\in\II}\,\hP(f)} so it has structure maps to
its constituents \and we see that \w{\varphi\sb{g}} is precisely the
structure map \w{\pi\sb{f}:\bK\to\hP(f)=\hN(g)} where
\w[\vsn.]{f=g\sb{\ast}\xi\sb{\fX}(\Id\sb{\bK})\in\fX\lin{\hN(g)}}

\noindent\textbf{Step 6.}\hsm
Finally, the map \w{\xi\sb{\fX}\circ\VE(\xi\sb{\fX})~:~\VE \VE\fX~\to~\VE\fX}
is the composite of the two
maps given in Steps 2 and 5, respectively. It corresponds under Lemma \ref{yoneda}
to the map \w{\psi:\bS{0}\to\fX\lin{\bL}} which is the image of
\w{\xi\sb{\fX}(\Id\sb{\bK}):\bS{0}\to\fX\lin{\bK}} under the map $\varphi$
of \wref[.]{eqvarphi}

However,
\w[,]{\Id\sb{\bK}:\bK\to\bK=\lim\sb{f\in\II}\,\hP(f)} as a
map into a limit, is determined by the structure maps
\w[,]{\pi\sb{f}:\bK\to\hP(f)} where
\w{\xi\sb{\fX}(\pi\sb{f}):\bS{0}\to\fX\lin{\hP(f)}} is
given by $f$ itself.

Since \w{\fX\lin{\bL}=\lim\sb{g\in\JJ}\fX\lin{\hN(g)}} is a limit,
it is enough to describe the component of $\psi$ into each constituent
\w[.]{\fX\lin{\hN(g)}} where it is given by the structure map
\w{\pi\sb{f}:\bK\to\hP(f)} for
\w[.]{f=g\sb{\ast}\xi\sb{\fX}(\Id\sb{\bK})\in\fX\lin{\hN(g)}}
Thus $\psi$ is determined in this component by
\w{g\sb{\ast}\xi\sb{\fX}(\Id\sb{\bK})} \wwh the same value we got in  Step 4.

This shows that \w{\xi\sb{\fX}\circ\VE(\xi\sb{\fX})} indeed equals
 \w[.]{\xi\sb\fX\circ \xi\sb{\VE \fX}}
 \end{proof}

For the representable mapping algebra \w[,]{\fX=\fME\bY} Proposition \ref{coalgebra}
and Remark \ref{rcoalg} yield:

\begin{cor}\label{free_coalgebra}
  The coalgebra map \w{\zeta} for the arrow set \w{\LE \bY} is induced by a map of
  mapping algebras \w[,]{\zeta':\fME(\RE(\rho\fME\bY)) \to\fME\bY} so that
  \w[.]{\zeta = (\rho \circ \zeta')\op}
\end{cor}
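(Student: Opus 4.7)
The plan is to specialize Proposition \ref{coalgebra} to the representable mapping algebra $\fX=\fME\bY$, and then identify the resulting map of mapping algebras with a lift of the canonical coalgebra structure described in Remark \ref{rcoalg}. Since $\fME\bY$ preserves \emph{all} limits in $\ModE$ (being representable), it is in particular a $\TE$-mapping algebra and extends to a $\Thel{\kappa}$-mapping algebra for every cardinal $\kappa$, so the hypothesis of Proposition \ref{coalgebra} is automatically satisfied.

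Applying the proposition yields a map in $\ST$
\[
\xi_{\fME\bY}:\VE(\fME\bY)\;=\;\fME(\RE(\rho\fME\bY))\;\to\;\fME\bY,
\]
satisfying the commutative square \wref{eqalg}, with the coalgebra map recovered as $\zeta_{\rho\fME\bY}=(\rho\circ\xi_{\fME\bY})\op$. I would then simply declare $\zeta':=\xi_{\fME\bY}$; by construction this is a map of mapping algebras and it induces, via $\rho$, the coalgebra map on the arrow set $\rho\fME\bY=\LE\bY$.

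The remaining verification is to check that the coalgebra map produced this way coincides with the canonical one given in Remark \ref{rcoalg}, namely $\zeta_{\LE\bY}=\LE(\widehat{\Id_{\LE\bY}})$. By Lemma \ref{yoneda}, a map $\xi_{\fME\bY}:\fME\bK\to\fME\bY$ (with $\bK=\RE(\rho\fME\bY)$) is determined by a spectrum map $\bY\to\bK$. Tracing through Step 2 of the proof of Proposition \ref{coalgebra}, the corresponding element $\xi_{\fME\bY}(\Id_\bK)\in\Sp(\bS{0},\fME\bY\lin{\bK})\cong\Sp(\bY,\bK)$ was defined to be the tautological map into the limit, which when unwound through the adjunction $\LE\dashv\RE$ is precisely the unit $\widehat{\Id_{\LE\bY}}:\bY\to\RE(\LE\bY)=\bK$. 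Applying $\rho$ and passing to the opposite category then gives $(\rho\circ\xi_{\fME\bY})\op=\LE(\widehat{\Id_{\LE\bY}})=\zeta$, as required.

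The only real obstacle is this last bookkeeping step: checking that the tautological element constructed in Step 2 of Proposition \ref{coalgebra} is genuinely the adjunction unit. This is essentially a naturality check for the enriched Yoneda isomorphism combined with the universal property of the limit $\bK$, but one has to unpack the definitions of $\LE$ and $\RE$ carefully to confirm that the tautological map into $\lim_{f\in\II}\hP(f)$ really agrees with $\widehat{\Id_{\LE\bY}}$ under the adjunction; once that is clear, the corollary follows immediately.
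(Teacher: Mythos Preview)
Your proposal is correct and follows exactly the paper's approach: the corollary is simply the specialization of Proposition \ref{coalgebra} (together with Remark \ref{rcoalg}) to $\fX=\fME\bY$, and the paper offers no argument beyond that one-line citation. The bookkeeping you flag in your final paragraph is in fact already carried out in Step 3 of the proof of Proposition \ref{coalgebra}, where for the representable $\VE\fX=\fME\bK$ the map $\xi_{\VE\fX}$ is identified with $\fME\eta_{\bK}$, so no additional verification is needed.
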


%
%c4      Small mapping algebras
%
\sect{Small mapping algebras}
\label{csma}

As noted in Section \ref{cdsc}, our goal is to associate to any mapping algebra
$\fX$ a cosimplicial resolution \w[,]{\Wu} with \w{\bY=\Tot\Wu} \emph{realizing} $\fX$:
that is, having $\fX$ weakly equivalent (\S \ref{smcs}) to \w[.]{\fmE\bY}

In order to show this, using \cite{Bou03}, \w{\bY\to\Wu}
must be acyclic with respect to \emph{any} $\bE$-module $\bM$. However, even if
\w{\fX=\fME\bY} to begin with, the modules appearing in \w{\TE} are of bounded
cardinality, so for general $\bE$, merely iterating the monad \w{\cTE} on $\bY$ to
produce a coaugmented cosimplicial space \w{\bY\to\Wu} will not yield the required
resolution  (although for \w[,]{\bE=\HFp} this can be done, as in
\cite[\S 3]{BS18}).

To bypass this difficulty, in this section we will show that
given $\fX$, there is a cardinal $\lambda$ such that any map from each spectrum
\w{\bW\sp{n}} to $\bM$ factors through a module in \w[.]{\TE}

\begin{defn}\label{indmodulemap}
  Given \w{\bX\in\Sp} and \w[,]{\bM\in\ModE} any map \w{\phi : \bX \to \bM} in \w{\Sp}
  is adjoint to an $\bE$-module map\w{\widetilde{\phi}:\bE\otimes\bX\to\bM}
  with \w[,]{\mu \sb{\bM}\circ (Id\sb{\bE} \otimes \phi)=\widetilde{\phi}} where \w{\mu \sb{\bM}}
  is the module structure map. In symmetric spectra the map $\widetilde{\phi}$ has an image
  \w{\im(\widetilde{\phi})} inside $\bM$, of cardinality \www[.]{\leq \|\bM\|}
  Since $\widetilde{\phi}$ is an $\bE$-module map, it fits into
  a commutative diagram

\begin{myeq}\label{immodule}
  \xymatrix{\bE\otimes (\bE \otimes \bX) \ar[d]_{\mu_{\bE\otimes \bX}}
    \ar[r]^{\Id \sb{\bE} \otimes \widetilde{\phi}} &
  \bE \otimes \im(\widetilde{\phi})\ar[d]^{\mu_{\bM}} \\
  \bE\otimes \bX \ar[r]^{\widetilde{\phi}} & \bM}
\end{myeq}

It follows that the image of \w{\mu \sb{\bM}: \bE \otimes \im(\widetilde{\phi}) \to \bM}
sits inside \w[,]{\im(\widetilde{\phi})} so the latter has an $\bE$-module structure.

We say that $\phi$ is \emph{effectively surjective} if \w[,]{\bM = \im(\widetilde{\phi})}
and denote the set of such maps by \w[.]{\eSp(\bX, \bM)}

If \w{\Phi: \bX \to P\bM} is a nullhomotopy of \w[,]{\phi: \bX \to \bM} with
\w{p \sb{\bM} \circ \Phi = \phi} (where \w{p \sb{\bM}} is the path fibration,
an $\bE$-module map), then  \wref{eqfunmap} yields a
map \w[.]{\Phi' : \bX \otimes \Delta[1] \sb{+} \to \bM}
Define an \ww{\bE}-module map
\w{\widehat{\Phi}: \bE \otimes \bX\otimes \Delta[1]\sb{+}\to \bM} by setting
\w[.]{\widehat{\Phi}:= \mu\sb{\bM} \circ (Id\sb{\bE} \otimes \Phi')}
We say that $\Phi$ is an \emph{effectively surjective nullhomotopy} of $\phi$ if
\w[.]{\bM = \im (\widehat{\Phi})} The \ww{\bE}-module structure on
\w{\im(\widehat{\Phi})} is given by \wref[.]{immodule}

Note that if  $\phi$ is effectively surjective,
so is $\Phi$. We denote the set of effectively surjective nullhomotopies of
\w{\phi: \bX \to \bM} by \w[.]{\eSp(\bX, P\bM)_\phi}
If we define \w{\overline{\Phi}: \bE \otimes \bX \to P \bM} by
\w[,]{\overline{\Phi}(e \otimes x):= \mu\sb{\bM}(e \otimes \Phi(x)(-))}
we see that \w{\overline{\Phi}} is a nullhomotopy of \w[.]{\widetilde{\phi}}
\end{defn}

Our goal is now to modify the construction \wref{dualstover} used in defining \w{\RE}
in terms of effectively surjective maps and nullhomotopies alone, thus
obtaining a modified
version of \w[:]{\cTE}

\begin{defn}
  For any $\bE$-module $\bM $ and effectively surjective \w[,]{\phi: \bX \to  \bM}
  define \w{\bQ_{\phi}} to be the pullback in $\Sp$:
$$
\xymatrix{
  \bQ_{\phi} \ar[rr]\ar[d] &&
{\displaystyle \prod\sb{\textstyle{\mbox{\small $(j:\bM\to\bM')\in\ModE$}}}}
\hspace{-15mm}'\hspace{20mm}
{\displaystyle \prod\sb{\textstyle{\mbox{\small $\eSp(\bX,P\bM')\sb{j\phi}$}}}}
\ P\bM' \ar@<18ex>[d]\sp<<<<{\prod' p\sb{\bM'}}\\
\bM \ar[rr]^-{(j)} & &
{\displaystyle \prod\sb{\textstyle{\mbox{\small $(j:\bM\to\bM')\in\ModE$}}}}
\hspace{-15mm}'\hspace{20mm}
{\displaystyle \prod\sb{\textstyle{\mbox{\small $\eSp(\bX,P\bM')\sb{j\phi}$}}}}
\bM' }
$$
\noindent where \w{\prod'} indicates that empty factors are to be omitted from the
product, so that the limit is in fact taken over a small diagram.

Finally, set
\w[.]{\sTE \bX: = \prod'_{\bM\in\ModE} \; \prod\limits_{\phi \in \eSp(\bX, \bM)} \bQ_{\phi}}
\end{defn}

\begin{defn}\label{dlambdax}
  For any symmetric spectrum $\bX$ we define a cardinal
$$
  \lambda_\bX~:=~\sup_{\bM \in \ModE} \{\|\im(\widetilde{\phi})\|: \phi: \bX \to \bM \} \cup
  \{\|\im(\widehat{\Phi})\|: \Phi: \bX \to P\bM\}.
$$
This makes sense since  \w{\|\im(\widetilde{\phi})\|} and
\w{\|\im(\widehat{\Phi})\|} are bounded by \w{\|\bE\otimes\bX\|} and
\w[,]{\|\bE\otimes\bX\otimes\Delta[1]\sb{+}\|} respectively. Thus for all
practical purposes we may simply set
\w[.]{\lambda_\bX:=\|\bE\otimes\bX\otimes\Delta[1]\sb{+}\|}
\end{defn}

\begin{prop}\label{card}
  For any symmetric spectrum $\bX$ and \w{\kappa \geq \lambda_\bX} we have a canonical
  isomorphism \w[.]{\sTE\bX \cong \Rel{\kappa}\Lel{\kappa} \bX}
\end{prop}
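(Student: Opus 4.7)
The approach is to match both sides at the level of indexing data, using the canonical factorization of each map (resp.\ nullhomotopy) through its $\bE$-module image, as built into Definition \ref{indmodulemap}. Concretely, every $\phi:\bX\to\bM$ in $\Sp$ with $\bM\in\ModE$ factors canonically as $\bX\xra{\bar\phi}\im(\widetilde{\phi})\hookrightarrow\bM$, with $\bar\phi$ effectively surjective and $\im(\widetilde{\phi})$ an $\bE$-submodule of cardinality $\leq\lambda_\bX\leq\kappa$, hence in $\Thel{\kappa}$. A parallel factorization holds for any nullhomotopy $\Phi:\bX\to P\bM$ of $\phi$, now through $\im(\widehat{\Phi})\subseteq\bM$, giving an effectively surjective nullhomotopy into $P(\im(\widehat{\Phi}))$. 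This provides a natural reindexing between the data parameterizing $\Rel{\kappa}\Lel{\kappa}\bX$ (arbitrary triples $(\bM,\phi,j)$ with $\bM,\bM'\in\Thel{\kappa}$, plus nullhomotopies) and the data parameterizing $\sTE\bX$ (effectively surjective maps and nullhomotopies out of $\bX$).

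Next I would unpack $\Rel{\kappa}\Lel{\kappa}\bX$ using Lemma \ref{lrightadj}: it is the product over all $(\bM,\phi,j)$ of the limits $\bQ^{(\bM,\phi,j)}$ of \wref{dualstover}. In that diagram the bottom row (with identity on the right) contributes only $\bM$, so the limit reduces to a pullback of $\bM$ against iterated products indexed by the nullhomotopies. The key calculation is to show that, after the reindexing above, the products over all $j$-translates and all nullhomotopies $\Psi$ of $j\phi$ collapse to the single pullback $\bQ_\phi$ defining $\sTE\bX$: every $\Psi$ factors uniquely through an effectively surjective one into the path spectrum of its image, and the non-effectively-surjective factors get absorbed by the diagonals in \wref{dualstover}. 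The $\prod'$ convention on the left then corresponds precisely to the factors on the right where $A(b_j)(\phi)\not\in\im(A(\chi_{\bM'}))$, in which case \wref{dualstover} collapses to $\bM$ anyway and contributes trivially.

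Assembling these identifications gives the canonical map $\sTE\bX\to\Rel{\kappa}\Lel{\kappa}\bX$, which is a spectrum map by the naturality of the factorization and of the pullback construction; its inverse is produced by restricting the indexing to the effectively surjective subset. I expect the main obstacle to be the careful bookkeeping of the triple-product structure of $\Rel{\kappa}\Lel{\kappa}\bX$ against the pullback-inside-double-product structure of $\sTE\bX$, and in particular keeping track of how the diagonals and projections in \wref{dualstover}, together with the ``omit empty factors'' convention on the left, conspire to cancel the over-counting caused by the factorization. Once this combinatorial matching is in place, the isomorphism and its naturality are essentially forced by the universal property of limits.
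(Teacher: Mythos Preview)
Your plan is correct and matches the paper's approach closely. Both arguments use the canonical factorization of any map (resp.\ nullhomotopy) $\bX\to\bM$ through its $\bE$-module image to replace the indexing data for $\Tel{\kappa}\bX$ by effectively surjective data defining $\sTE\bX$; the paper phrases this as exhibiting a cofinal subdiagram of the limit diagram for $\Tel{\kappa}\bX$, while you phrase it as a reindexing with careful bookkeeping of the diagonals and the $\prod'$ convention, but the substance is the same.
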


\begin{proof}
  Recall from \S \ref{rmonad} that we write \w{\Tel{\kappa}} for
  \w[.]{\Rel{\kappa}\Lel{\kappa}}
  By the description in Lemma \ref{lrightadj}, we know that
  \w{\Tel{\kappa} \bX= \Rel{\kappa}(\rho \circ \F(\bX, -))} is given by
  \begin{myeq}\label{eqrightadj}
    \Tel{\kappa}\bX~:=~
  \prod\limits_{\bM \in \TE} \prod\limits_{\phi:\bX\to\bM}
  \prod\limits_{j: \bM \to \bM'} \bQ^{(\bM,\phi, j)}~.
\end{myeq}
  \noindent where for nullhomotopic \w{\phi:\bX\to\bM} the $\bE$-module
  \w{\bQ^{(\bM,\phi, j)}} is the limit of:
$$
  \xymatrix{
    \prod\limits_{\Phi:\phi\sim\ast}P\bM \ar[r]^{\prod Pj}\ar[dr]_{\prod j p \sb{\bM}} &
    \prod\limits_{\Phi:\phi\sim\ast}P\bM^{\prime}\ar[d]^{\prod p\sb{\bM'}} &&
    \prod\limits_{\Psi:j \circ \phi\sim\ast}P\bM'\ar[ll]_{\top \proj\sb{j\circ\Phi}}
    \ar[d]^{\prod p\sb{\bM'}} \\ & \prod\limits_{\Phi:\phi\sim\ast} \bM' &&
    \prod\limits_{\Psi:j \circ \phi\sim\ast}\bM' \ar[ll]_{\top \proj\sb{j\circ\Phi}} &
  \bM \ar[l]_-{\diag\circ j} }
$$
(compare \wref[).]{dualstoverj}

Our goal is to replace this limit by one involving only $\bE$-modules in
 \w[,]{\Thel{\kappa}} by using only effective surjective maps and nullhomotopies.

  Note that
  \w[,]{\{\Psi:j \circ \phi\sim\ast\} = j_*\{\Phi:\phi\sim\ast\}\amalg\New^0_1}
    where \w{\New^0_1} is the set of nullhomotopies of \w{j \circ\phi} not induced via $j$
    from nullhomotopies of $\phi$.

If \w{\phi: \bX \to \bM} is effectively surjective, then so is any nullhomotopy
 \w{\Phi: \bX \to P\bM} of $\phi$.  So we may replace the index set
 \w{\{\Phi:\phi\sim\ast\}} by \w[.]{\eSp(\bX, P\bM)_{\phi}}

 If \w{\Phi : \bX \to P\bM'} is a nullhomotopy of \w{j\circ\phi : \bX \to \bM'}
 which is not effectively surjective, we have a commutative diagram
$$
   \xymatrix{\bX \ar@/^2pc/[rr]^{\Phi} \ar[r]^{\eta \sb{\bX}}\ar[dr]_{\Phi''}&
     \bE \otimes \bX \ar[r]^{\overline{\Phi}}\ar[d] &P\bM' \\& P\bM'' \ar[ur]_{Pj'}~,}
   $$
   where\w{\bM''=\im(\widehat{\Phi}: \bE\otimes \bX \otimes \Delta[1]\sb{+} \to \bM')} (see \S\ref{indmodulemap}) and \w{j' : \bM'' \to \bM'} is the inclusion. Thus
\w{\Phi''} is an effectively surjective nullhomotopy.

   Thus, whenever \w[,]{\kappa > \lambda \sb{\bX}} we have a cofinal diagram defining
   \w{\Tel{\kappa} \bX} in which only those \w{\bM\in\Thel{\kappa}} appear
   for which there is either an effective surjection or an effectively surjective
   nullhomotopy for some map \w[.]{\bX \to \bM} Therefore, we may restrict ourselves to
  $\bM$ in \w[.]{\Thel{\lambda \sb{\bX}}} This shows that the natural map
   \w{\sTE \bX \to\Tel{\kappa} \bX} is an isomorphism.
\end{proof}

\begin{remark}\label{rcard}
  Proposition \ref{card} shows that \w{\sTE} is in fact locally small, in that for
  every \w[,]{\bX\in\Sp} \w{\sTE\bX} is naturally equivalent to the value of a small functor.

  In particular, this implies that \w[,]{\sTE} \emph{a posteriori}, is a functor, since for
  any map \w[,]{f:\bX\to\bY} we have \w{f\sb{\ast}:=\sTE(f)=\Tel{\kappa}(f)} for
\w{\kappa=\max\{\lambda\sb{\bX},\lambda\sb{\bY}\}} (and similarly for composites).
However, the reader may find the following explicit description of \w{f\sb{\ast}}
helpful:

Let  \w{\psi: \bY \to \bM} an effective surjection and \w{j: \bM \to \bM'} a map of
  $\bE$-module spectra, with \w{\Psi: \bY \to P\bM'} an effectively surjective nullhomotopy
  of \w[.]{j \circ \psi} Set  \w{\bM''= \im(\widetilde{\psi \circ f})} and
  \w[.]{\bM'''= \im(\widehat{\Psi \circ f})} By \wref{immodule} it follows that
  \w{\bM''} and \w{\bM'''} are \ww{\bE}-modules.

Note that we have the following commutative diagram
$$
\xymatrix{
  \bX\ar[d]\sb{\eta\sb{\bX}} \ar[r]^{f} & \bY\ar[d]\sp(0.6){\eta\sb{\bY}} \ar[r]^{\psi} & \bM \\
  \bE \otimes \bX \ar[urr]^(0.35){\widetilde{\psi\circ f}} \ar[r]_{\Id \sb{\bE} \otimes f} & \bE \otimes \bY \ar[ur]_(0.6){\widetilde{\psi}} \ar[r]_{Id \sb{\bE} \otimes \psi} &
  \bE \otimes \bM \ar[u]_{\mu \sb{\bM}}}
$$

\noindent in \w[.]{\Sp} Set \w[.]{\phi= \psi \circ f} By the definitions of \w{\bM''}
and \w{\bM'''} we get \ww{\bE}-module  maps \w{j'': \bM'' \to \bM} and
\w{j': \bM''' \to \bM'} fitting into the diagram
$$
\xymatrix{
  \bE \otimes \bY \ar@/^3pc/[rrrr]^{\overline{\Psi}}\ar[ddrr]_{\widetilde{\psi}} &
  \bE \otimes \bX \ar[l]_{Id \sb{\bE} \otimes f}\ar[rr]^{\overline{\Phi}}\ar[dr]_{\widetilde{\phi}} & &
  P\bM'''\ar[d]^{p \sb{\bM'''}} \ar[r]^{Pj'} & P\bM' \ar[dd]^{p \sb{\bM'}}\\
  && \bM'' \ar[r]^{j'''}\ar[d]^{j''} &\bM''' \ar[dr]^{j'}& \\ && \bM \ar[rr]^{j} & & \bM'}
$$
The map \w{j'''} exists and it is  an \ww{\bE}-module map because
\w[.]{\widehat{\Psi \circ f} \circ i\sp{\bX}\sb{1} = \widetilde{j\circ\phi}}
Here \w{i\sp{\bX}\sb{1}} is given by the identification of $\bX$ with
\w{\bX \otimes \lin{1}} inside \w[.]{\bX\otimes \Delta[1]\sb{+}}

The component $\vartheta$ of the map \w{f\sb{\ast}:\sTE \bX \to \sTE \bY} into the factor
\w{\bQ_{\psi}} of \w{\sTE \bY} is defined by projecting from
\w{\sTE \bX} onto \w{\bQ_\phi} and onto the copy of \w{P\bM'''} indexed by $\Phi$
($=\overline{\Phi}\circ \eta\sb{\bX}$).
This then maps by \w{Pj'} to the copy of \w{P\bM'} in \w{\bQ_{\psi}} indexed by
$\Psi$.

The map \w[,]{f\sb{\ast}} restricted to \w[,]{\bQ\sb{\phi}} is then given
by the universal property of the pull-back square as follows:

$$
\xymatrix{\bQ\sb{\phi}\ar[drrr]^{\vartheta} \ar[d] \ar[dr]\sp(0.6){f\sb{\ast}}\\
\bM''\ar[dr]\sb{j''} & \bQ_{\psi} \ar[rr]\ar[d] &&
{\displaystyle \prod\sb{\textstyle{\mbox{\small $(j:\bM\to\bM')\in\ModE$}}}}
\hspace{-15mm}'\hspace{20mm}
{\displaystyle \prod\sb{\textstyle{\mbox{\small $\eSp(\bX,P\bM')\sb{j\psi}$}}}}
\ P\bM' \ar@<18ex>[d]\sp<<<<{\prod' p\sb{\bM'}}\\
& \bM \ar[rr]^-{(j)} & &
{\displaystyle \prod\sb{\textstyle{\mbox{\small $(j:\bM\to\bM')\in\ModE$}}}}
\hspace{-15mm}'\hspace{20mm}
{\displaystyle \prod\sb{\textstyle{\mbox{\small $\eSp(\bX,P\bM')\sb{j\psi}$}}}}
\bM'~. }
$$
\end{remark}

%
%c5      Cosimplicial resolutions and the $\bE$-based Adams spectral sequence
%
\sect{Cosimplicial resolutions and the $\bE$-based Adams spectral sequence}
\label{ccsass}

For any limit cardinal $\lambda$, the adjoint functors \w{\LE} and
\w{\RE} constructed in Section \ref{cdsc} define a
comonad \w{\SE=\LE\circ\RE} on the category \w{(\Xi\sp{\lambda})\op}
(see \S \ref{rxi}). Using \cite[5.7,8.5,9.7]{Bou03}, we now show how this comonad,
applied to a mapping algebra $\fX$, yields a cosimplicial spectrum \w{\Wu} such that
\w{\Tot\Wu} realizes $\fX$ under favorable circumstances
(in particular, when \w{\fX=\fME\bY} for an $\bE$-good spectrum $\bY$).

We note that the proper setting for our constructions is the resolution model category of
cosimplicial spectra of \cite[\S 3]{Bou03}, and the associated model category of
simplicial mapping algebras (see \S \ref{srmc} below).

\begin{mysubsection}{The cosimplicial spectrum $\Wu$ associated to $\fX$}
\label{scsfx}
Given a mapping algebra \w[,]{\fX\in\MapE} by iterating the comonad
\w{\SE} on the arrow set \w{A=\rho \fX} we obtain
as usual an augmented simplicial object \w{\vare:\Vd\to A} in \w[,]{\Xi\op}
with \w[,]{\wV\sb{k}:= (\SE)^{k+1} A} and face and degeneracy maps induced by the
structure maps of the comonad (see \cite[8.6.4]{Wei94}).

If we assume that $\fX$ extends as in Proposition \ref{coalgebra} \wh e.g., if it
is representable \wh then \w{A=\rho \fX} has a coalgebra structure
\w{\zeta_{A} : A \to \SE A = \wV_0} over the comonad \w[,]{\SE}
which provides an extra degeneracy for \w[.]{\Vd\to A} Thus \w{\RE} applied to
this augmented simplicial object yields a cosimplicial spectrum \w[,]{\Wu}
with \w[,]{\bW\sp{0}=\RE(A)} \w[,]{\bW\sp{1}=\RE(\wV\sb{0})}
\w[,]{d\sp{0}=\RE(\zeta\sb{A})} and \w{d\sp{1}=\RE(\vare)}
(see \cite[Prop. 3.27]{BS18} for a detailed description). By applying the functor
\w{\LE} to this cosimplicial spectrum we obtain a simplicial object in mapping
algebras \w{\fME \Wu} (by contravariance of \w[),]{\RE} which is augmented to $\fX$,
yielding a map of simplicial mapping algebras
\w[.]{\fME \Wu \to c(\fX)_{\bullet}}
\end{mysubsection}

\begin{defn}\label{detwe}
We say that a map \w{\ff:\fWd\to\fUd} of simplicial spectral functors (e.g.,
mapping algebras) is an \emph{\ww{E\sp{2}}-equivalence} (cf.\ \cite{Jar04})
if for every \w[,]{\bM \in \TE} the induced map of simplicial
abelian groups \w{\fWd\lin{\bM}\to\fUd\lin{\bM}} is a weak equivalence (of simplicial
sets).
\end{defn}

\begin{prop}\label{acyclic_reso}
If for \w{\fX\in\MapE} and \w{\Wu} as above $\fX$ is known to be a
homotopy functor, then \w{\fME \Wu \to c(\fX)_{\bullet}} is an \ww{E\sp{2}}-equivalence.
\end{prop}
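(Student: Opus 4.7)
The strategy is to exhibit the augmentation $\fME \Wu \to c(\fX)_{\bullet}$ as a simplicial homotopy equivalence in mapping algebras, by constructing an explicit extra degeneracy. Evaluating at any $\bM\in\TE$ then produces a simplicial homotopy equivalence of simplicial spectra, hence a weak equivalence on every $\pi_k$, which by Definition \ref{detwe} is the required $E^{2}$-equivalence.

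The main input is the coalgebra structure $\zeta_A: A \to \SE A$ on $A = \rho\fX$ over the comonad $\SE$, supplied by Proposition \ref{coalgebra} (automatic when $\fX$ is representable, by Remark \ref{rcoalg}); the size hypothesis of Proposition \ref{coalgebra} can be arranged by enlarging $\lambda$ if necessary, without altering the underlying statement. Standard bar-construction yoga then says that the iterates $\SE^k(\zeta_A): \wV_{k-1}\to\wV_k$ provide an extra degeneracy at each level of the augmented simplicial object $\vare: \Vd \to A$ in $\Xi^{\op}$, witnessing $\vare$ as a simplicial deformation retract.

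The next step is to transport this extra degeneracy across the construction of \S \ref{scsfx}. For this I would use the map $\xi_{\fX}: \VE\fX \to \fX$ built in Steps 1--2 of the proof of Proposition \ref{coalgebra}, together with the coherence identity $\xi_{\fX}\circ\VE(\xi_{\fX}) = \xi_{\fX}\circ \xi_{\VE\fX}$ from Step 6, to assemble explicit extra degeneracies on the augmented simplicial mapping algebra $\fME\Wu\to c(\fX)_{\bullet}$. Evaluation at $\bM$ is a functor and so preserves these extra degeneracies, yielding a simplicial contraction of $\fME\Wu\lin{\bM}$ onto $\fX\lin{\bM}$. The homotopy functor hypothesis on $\fX$ enters via Remark \ref{rhtpyfun}: it reduces checking weak equivalence on each $\pi_k$ to the case $k=0$ by replacing $\bM$ with $\Omega^k\bM$, and is what ensures that $\fX$ applied to the possibly large modules $\bW^n$ produced by $\RE$ behaves correctly up to weak equivalence.

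The main obstacle is this transport step: $\VE$ is not literally a comonad on $\MapE$; only $\SE$ is a comonad on $\Xi^{\op}$. So the extra degeneracies on the mapping-algebra side must be built by hand from $\xi_{\fX}$ and its iterates, rather than imported via a black-box bar-construction theorem. However, the required simplicial identities all follow from the coherences established in Steps 4--6 of the proof of Proposition \ref{coalgebra}, so the argument goes through with careful diagram chasing.
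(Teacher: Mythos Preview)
Your strategy has a genuine gap in the transport step. The map \w{\xi\sb{\fX}:\VE\fX\to\fX} constructed in Proposition~\ref{coalgebra} is not a candidate for the extra degeneracy: it \emph{is} the augmentation \w{\fME\bW\sp{0}\to\fX} of the simplicial mapping algebra. A simplicial contraction of \w{\fME\Wu\to c(\fX)\sb{\bullet}} would require, at the bottom, a section \w{\fX\to\fME\bW\sp{0}=\VE\fX} of this augmentation, and no such map is produced anywhere \wh the whole point of Proposition~\ref{coalgebra} is that the section \w{\zeta\sb{A}} exists only \emph{after} applying $\rho$ (indeed \w[).]{\zeta\sb{\rho\fX}=(\rho\xi\sb{\fX})\op} The single coherence \w{\xi\sb{\fX}\circ\VE(\xi\sb{\fX})=\xi\sb{\fX}\circ\xi\sb{\VE\fX}} of Step~6 is the associativity-type identity for $\fX$ as a ``$\VE$-algebra''; it does not furnish the missing section, nor the full battery of simplicial identities an extra degeneracy must satisfy against all face and degeneracy maps.

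The paper's argument sidesteps this by never leaving the arrow-set level for the contractibility step. The comonad \w{\SE} and the coalgebra structure \w{\zeta\sb{A}} live genuinely on \w[,]{\Xil\op} so the augmented simplicial arrow set \w{\LE\Wu\to\rho\fX} is contractible by the standard bar-construction fact \cite[Proposition~8.6.10]{Wei94}. One then passes to homotopy groups in two steps: Corollary~\ref{grpstr} says that the arrow set \w{\rho\fX} already determines \w{\pi\sb{0}\fX\lin{\bM}} for each $\bM$, so the augmented simplicial abelian group \w{[\Wu,\bM]\to\pi\sb{0}\fX\lin{\bM}} is acyclic; and the homotopy-functor hypothesis, via \wref[,]{eqomegapi} upgrades this from \w{\pi\sb{0}} to all \w{\pi\sb{k}} by replacing $\bM$ with \w[.]{\Omega\sp{k}\bM} This is exactly where the hypothesis on $\fX$ is used \wh not, as you suggest, to control $\fX$ on the large modules \w[.]{\bW\sp{n}}
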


\noindent Equivalently, for every \w[,]{\bM \in \TE} the augmented
simplicial abelian group \w{[\Wu, \bM]\to\pi_0(\fX\lin{\bM})} is
acyclic, where \w{[\Wu, \bM]} is the simplicial abelian group
obtained by applying the homotopy functor \w{[-,\bM]} (see Lemma
\ref{lerel}) in each cosimplicial dimension.
\begin{proof}
By standard facts about comonads (see \cite[Proposition 8.6.10]{Wei94}), the
augmented simplicial arrow set \w{\LE \Wu \to \rho \fX} is contractible,
so by Corollary \ref{grpstr} and \wref{eqomegapi} the augmented simplicial mapping algebra
\w{\fME\Wu\to\fX} is contractible, too.
\end{proof}

\begin{remark}\label{rdegmod}
  Note that each \w{\bW^n} is an $\bE$-module, and for each \w[,]{0\leq i\leq n}
the codegeneracy map \w{s^i_n:\bW^n\to\bW^{n+1}} is \w[,]{\RE (\SE)^{n-i} \epsilon_{(\SE)^i A}}
where \w{\epsilon_{(\SE)^i A}} is the comonad counit map for \w[.]{(\SE)^i A}
Thus the codegeneracies are in the image of \w{\RE} and in particular are
$\bE$-module maps.
\end{remark}

\begin{defn}\label{dgeinj}
For any ring spectrum $\bE$, \w{\G(\bE):=\ModE} is a class of injective models in
\w{\Sp} in the sense of \cite[\S 3.1]{Bou03}, and we have a \ww{\G(\bE)}-localization
functor \w[,]{\widehat{\cL}_{\G(\bE)}:\Sp\to\Sp} with a map
\w{\eta\sb{\bY}:\bY\to\widehat{\cL}_{\G(\bE)}\bY} (see \cite[\S 8]{Bou03}).

A symmetric spectrum $\bY$ is called \ww{\bE}-\emph{good} if \w{\eta\sb{\bY}}
is an $\bE$-\emph{equivalence} \wh that is, for each \w[,]{\bM \in \G(\bE)}
the induced map \w{[\widehat{\cL}_{\G(\bE)}\bY, \bM] \to [\bY, \bM]} is an isomorphism
(this is called a \ww{\G(\bE)}-equivalence in \cite{Bou03}).
\end{defn}

\begin{remark}\label{recompl}
  By \cite[Theorems 6.5 \& 6.6]{Bou79}, when $\bE$ and $\bY$ are connective and
  the core $R$ of \w{\pi\sb{0}\bE} is either \w{\ZZ/n} or a subring of $\QQ$,
  \w{\widehat{\cL}_{\G(\bE)}\bY} is simply the usual $R$-completion of $\bY$, given by
    smashing with the Moore spectrum for $R$ (see \cite[\S 2]{Bou79}).
\end{remark}

\begin{notn}\label{cardY}
For any \w[,]{\bY\in\Sp} let
\w[,]{\wlY:=\sup\{\lambda_{\sTE^n \bY}\}\sb{n\in\NN}} in the notation of \S \ref{dlambdax}.
\end{notn}

\begin{mysubsection}{The cosimplicial spectrum $\Wu$ associated to $\bY$}
\label{scsby}
When the mapping algebra $\fX$ of \S \ref{scsfx} is realizable by a
spectrum $\bY$, and \w[,]{\lambda\geq\wlY} we can think of the
cosimplicial spectrum \w{\Wu} constructed there from \w{\fX=\fME\bY}
as having the form \w[,]{\bW\sp{k}:=\sTE\sp{k+1}\bY} with
coaugmentation \w[.]{\eta\sb{\bY}:\bY\to\sTE\bY}

For a cosimplicial spectrum \w{\Wu} the \emph{totalization} \w{\Tot\Wu} as
in \cite[Section 2.8]{Bou03} then satisfies
\end{mysubsection}

\begin{thm}\label{main1}
If $\bE$ is a ring spectrum, $\bY$ an \ww{\bE}-good symmetric spectrum,
\w[,]{\lambda=\wlY} and \w{\Wu} is as above, the canonical map \w{\bY \to \Tot \Wu} is an
$\bE$-equivalence.
\end{thm}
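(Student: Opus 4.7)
The plan is to recognize $\Wu$ as a $\G(\bE)$-injective cosimplicial resolution of $\bY$ in the sense of Bousfield \cite{Bou03}, deduce that the canonical coaugmentation $\bY \to \Tot\Wu$ realizes the $\G(\bE)$-localization $\eta_\bY : \bY \to \widehat{\cL}_{\G(\bE)}\bY$, and then invoke $\bE$-goodness: Definition \ref{dgeinj} records that this hypothesis is exactly the statement that $\eta_\bY$ is an $\bE$-equivalence.

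Concretely, the first point to verify is that each $\bW^n$ lies in $\G(\bE)$. By construction $\bW^n = \RE((\SE)^n\rho\fME\bY)$, and Lemma \ref{lrightadj} exhibits $\RE(A)$ as a product, inside $\ModE$, of the spectra $\bQ^{(\bM,\phi,j)}$, each itself a limit of $\bE$-modules. Thus every $\bW^n$ is an $\bE$-module and hence $\G(\bE)$-injective, and Remark \ref{rdegmod} shows that its codegeneracies are $\bE$-module maps, which is what is needed for the Reedy-style fibrancy that makes $\Tot\Wu$ behave correctly in the resolution model category of \cite[\S 3]{Bou03}. The second point is to show that for every $\bM \in \G(\bE)$ the augmented simplicial abelian group $[\Wu,\bM] \to [\bY,\bM]$ is acyclic. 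When $\bM \in \TE$ this is Proposition \ref{acyclic_reso} applied to the homotopy functor $\fME\bY$. To extend from $\TE$ to all of $\ModE$, I would use the machinery of Section \ref{csma}: with $\lambda = \wlY$, Proposition \ref{card} implies that any map $\phi : \bW^n \to \bM$ with $\bM \in \ModE$ factors through its effective image $\im(\widetilde{\phi}) \in \TE$, and similarly for nullhomotopies via Definition \ref{indmodulemap}, so that $[\Wu,\bM]$ is assembled from acyclic augmented simplicial abelian groups $[\Wu,\bM']$ with $\bM' \in \TE$. Given these two properties, \cite[Theorems 5.7, 8.5 and 9.7]{Bou03} identify $\bY \to \Tot\Wu$ with $\eta_\bY$, and $\bE$-goodness finishes the argument.

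The main obstacle is precisely the set-theoretic passage in the second step. A priori $\RE$ is defined by a product ranging over all of $\ModE$, which is a proper class; the very reason for Sections \ref{cdsc} and \ref{csma} is to produce a small, honestly functorial replacement $\sTE$ so that $\Wu$ exists and its totalization can be taken. Verifying that the $\Wu$ built from $\sTE$ --- rather than from an abstract cobar construction over $\ModE$ --- genuinely plays the role of a $\G(\bE)$-resolution when tested against arbitrary $\bM \in \ModE$, and not merely against objects of $\TE$, is the delicate point, and rests essentially on Proposition \ref{card} together with the effective-surjection formalism of Definition \ref{indmodulemap}.
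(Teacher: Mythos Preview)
Your proposal is correct and follows essentially the same route as the paper: verify that $[\Wu,\bM]\to[\bY,\bM]$ is acyclic for every $\bM\in\G(\bE)$ by combining Proposition \ref{acyclic_reso} with Proposition \ref{card}, then invoke \cite[\S 9]{Bou03} to identify $\Tot\Wu$ with $\widehat{\cL}_{\G(\bE)}\bY$ and conclude via $\bE$-goodness. The only place to sharpen is your ``assembling'' step for general $\bM$: rather than factoring individual maps through effective images, it is cleaner to observe that Proposition \ref{card} (together with $\lambda=\wlY$) identifies the cosimplicial object $\Wu$ built from $\sTE$ with the one built from the monad $\Tel{\kappa}$ for every $\kappa\geq\lambda$, so Proposition \ref{acyclic_reso} applied at level $\kappa$ directly yields the required acyclicity for any $\bM\in\Thel{\kappa}$.
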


\begin{proof}
By Proposition \ref{card}, the augmented simplicial group \w{[\Wu, \bM] \to [\bY, \bM]}
is acyclic for all \w[,]{\bM \in \G(\bE)} using Proposition \ref{acyclic_reso}.
Since $\bY$ is \ww{\bE}-good, \w{\widehat{\cL}_{\G(\bE)}\bY\simeq\Tot \Wu} so
\w{\bY \to \Tot \Wu} is an $\bE$-equivalence by \cite[\S 9]{Bou03}.
\end{proof}

\begin{mysubsection}{Cosimplicial Adams resolutions}
\label{scsar}
Recall that an $\bE$-\emph{Adams resolution} for an ($\bE$-good) spectrum $\bY$
is a sequence of spectra
\w{\xymatrix{
\bX=\bX_0 & \bX_1 \ar[l]^-{g_0} & \bX_2 \ar[l]^-{g_1} & \cdots \ar[l] }}
such that for each \w[:]{s \geq 0}

\begin{enumerate}
\renewcommand{\labelenumi}{(\roman{enumi})}
\item \w{\holim\bX\sb{s}} is $\bE$-equivalent to $\bY$.
\item If \w{\bK_s} is the cofiber of \w{g\sb{s}} and
  \w{f_s: \bX_s \to \bK_s} is the structure map, then \w{\bE \otimes f_s}
has a retraction.
\item \w{\bK_s} is a retract of \w[.]{\bE \otimes \bK_s}
\end{enumerate}
(see \cite[\S 2.2.1]{Rav86}).

Given an $\bE$-good spectrum $\bY$ with $\bE$-mapping algebra \w[,]{\fME\bY}
we saw in the previous section  how to construct a cosimplicial spectrum \w{\Wu}
such that \w{\Tot \Wu} is an $\bE$-completion of $\bY$, in the sense
of \cite[\S 2.2.2]{Rav86}.

Note that we have a model category of $\bE$-modules given by
\cite[Theorem 4.1]{SShipA}, and thus an induced Reedy model
category \w{\ModE\sp{\Delta}} of cosimplicial $\bE$-modules (see
\cite[Theorem 15.3.4]{Hir03}). We may thus replace the \w{\Wu} of
\S \ref{scsfx} by a Reedy fibrant object in
\w[,]{\ModE\sp{\Delta}} (which we also denote by \w[,]{\Wu} to
avoid unnecessary notation).

We then have a tower of fibrations
\begin{myeq}\label{eqtottower}
\xymatrix{\bW^0 = \Tot_0(\Wu) & \Tot_1(\Wu) \ar[l]_(0.55){h_1} \cdots \Tot_{k-1}(\Wu) &
  \Tot_k(\Wu) \ar[l]_(0.3){h_{k}} & \cdots\ar[l] }
\end{myeq}
\noindent (see \cite[\S 2.8]{Bou03}), with the fibre of \w{h_k}  given by
\w[,]{\Omega^k \bF_k} where \w{\bF_k} is the fiber of the map
\w{\bW^{k} \to M^{k-1}\Wu} to the matching spectrum of \cite[X, \S 4.5]{BK72}.

Setting \w{\bX\sb{s}=\Tot\sb{s}\Wu} and \w[,]{\bK\sb{s}:=\Omega\sp{s}\bF\sb{s+1}}
we see that
\mydiagram[\label{eqadams}]{
  \bW^0\ar[d]^{j_0} & \Tot_1(\Wu) \ar[d]^{j_1} \ar[l]_(0.6){h_1} \cdots &
  \ar[l] \Tot_{k-1}(\Wu)\ar[d]^{j_{k-1}} & \Tot_k(\Wu)\ar[d]^{j_{k}} \ar[l]_(0.45){h_{k}} &
  \cdots\ar[l] \\ \bF_1 & \Omega \bF_2 & \Omega^{k-1}\bF_k & \Omega^k \bF_{k+1}
}
is an \ww{\bE}-Adams resolution for $\bY$.

Moreover,
\begin{myeq}\label{eqnormchain}
\bF_k~=~\bigcap\sb{j=0}\sp{k-1}\,\Ker(s\sp{j}:\bW^{k}\to\bW^{k-1})~,
\end{myeq}
As noted in \S \ref{rdegmod}, all the codegeneracies of \w{\Wu} are $\bE$-module maps,
so \w{\bF_{k}} is an $\bE$-module.

Moreover, the connecting homomorphism
\w{\delta\sp{k}:\pi\sb{\ast}\bF_k\to\pi\sb{\ast}\bF\sb{k+1}} for this tower of fibrations
is just the differential for the normalized cochains on \w{\pi\sb{\ast}\Wu} \wwh that is,
the alternating sum of the coface maps (see \cite[X, \S 6]{BK72}).

Given a (finite) spectrum $\bZ$, applying the functor \w{\F(\bZ,-)} to
\w{\Wu} yields a cosimplicial spectrum, whose total spectrum is the $\bE$-completion of
\w[,]{\F(\bZ,\bY)} under favorable assumptions. We define the $\bE$-\emph{based
  Adams spectral sequence} for \w{\F(\bZ,\bY)} to be the homotopy spectral sequence
for \w{\F(\bZ,-)} applied to \wref[,]{eqtottower} with
\begin{myeq}\label{spcseq}
E_1^{k,t} = \pi\sb{t-k}(\Omega^k\F(\bZ, \bF_{k}))~\cong~
\pi_0(\F(\Sigma^{t-k} \bZ, \Omega^k \bF_k))~\cong~\pi_0(\F(\Sigma^{t}\bZ, \bF_k))~.
\end{myeq}
(see \cite[X, \S 6]{BK72}). This agrees with the usual \ww{\bE}-based Adams
spectral sequence
from the \ww{E_{2}}-term on (see \cite[\S 2.2.4]{Rav86}, and compare \cite{BK72a}).
\end{mysubsection}

\begin{remark}\label{rdecomplma}
Note that by Theorem \ref{main1} \w{\Wu} (and thus our choice for the $\bE$-completion
of $\bY$), as well as the $\bE$-based Adams spectral sequence for $\bY$, are determined
functorially by \w{\fME\bY} (in fact, by \w{\rho\fME\bY} with its coalgebra structure)
and by $\bZ$, since the construction of \w{\Wu} in \S \ref{scsfx} is functorial in $\fX$.

The Reedy model category of cosimplicial $\bE$-modules of \cite[Theorem 4.1]{SShipA}
also has functorial factorizations, so the same remains true after fibrant replacement
of \w[.]{\Wu}
\end{remark}

%
%c6      Differentials in the Adams spectral sequence
%
\sect{Differentials in the Adams spectral sequence}
\label{cdiff}

In this section we assume $\bE$ is a ring spectrum, \w{\bY\in\Sp} is $\bE$-good,
and \w{\bZ\in\Sp} is finite and \w{\lambda\geq \wlY,\wlZ}
(in the notation of \S \ref{cardY}). We then let \w[,]{\fX=\fME\bY}
with \w{\bY\to\Wu} constructed from $\fX$ as in \S \ref{scsfx}, and identify the
$\bE$-based Adams spectral sequence for \w{\F(\bZ,\bY)} with the homotopy spectral
sequence of the cosimplicial spectrum \w[.]{\F(\bZ,\Wu)}
(We do not in fact need $\bY$ to be $\bE$-good in order for most of our results to hold,
but without some such assumption the spectral sequence need not converge, so information
about it will not be of much use.)

We can now state our first main result:

\begin{thm}\label{pdiff}
Given $\bE$, $\bZ$, and $\bY$ as above, for each \w[,]{r\geq 1} the
\ww{d\sb{r}}-differential in the $\bE$-based Adams spectral sequence for
\w[,]{\F(\bZ,\bY)} and thus its \ww{E\sb{r+1}}-term, can be calculated from
the cosimplicial \wwb{r-1}truncated space \w[.]{\Pnk{r-1}{0}\fME\bZ\lin{\Wu}}
\end{thm}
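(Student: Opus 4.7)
The plan is to express $d_r$ in the homotopy spectral sequence of the cosimplicial spectrum $\F(\bZ,\Wu)=\fME\bZ\lin{\Wu}$ as an $(r-1)$-fold higher order operation, in the spirit of \cite{BBS17}, and then to verify that every piece of data entering this operation is already recorded by the cosimplicial $(r-1)$-truncated space $\Pnk{r-1}{0}\fME\bZ\lin{\Wu}$.

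First, I would recall from \cite[Ch.~X, \S 6]{BK72} the standard description of $d_r\colon E_r^{k,t}\to E_r^{k+r,t+r-1}$ in terms of the tower of partial totalizations \wref{eqtottower} applied to $\F(\bZ,\Wu)$: an element $\xi\in E_r^{k,t}$ is represented by a class $\alpha\in\pi_{t-k}(\Omega^k\F(\bZ,\bF_k))$ that extends to a lift through $r-1$ successive stages of the tower, and $d_r\xi$ is read off from the next connecting homomorphism into $\pi_{t-k-1}(\Omega^{k+r}\F(\bZ,\bF_{k+r}))$.

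Second, I would unpack this lifting data as a chain $(\alpha_0,\alpha_1,\ldots,\alpha_{r-1})$ of representatives, with $\alpha_j\in\pi_{t-k+j}\F(\bZ,\bW^{k+j})$, together with $r-1$ layers of nullhomotopies (and iterated higher homotopies) witnessing that the successive alternating-sum coboundaries vanish in the normalized Moore complex of $\F(\bZ,\Wu)$. The crucial observation is that these data live in the homotopy groups $\pi_i\F(\bZ,\bW^{k+j})$ only for $0\le i\le r-1$ above the base stem, exactly the range retained by the Postnikov truncation $\Pnk{r-1}{0}$.

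Third, I would translate the zig-zag computing $d_r\xi$ into the truncated cosimplicial object $\Pnk{r-1}{0}\fME\bZ\lin{\Wu}$. By \wref{eqtruncwe}, applicable because $\fME\bZ$ is a homotopy functor and hence commutes up to weak equivalence with $\Omega$, the homotopy data of the shifted mapping spectra $\F(\bZ,\Omega^t\bW^k)$ in degrees up to $r-1$ are already visible in $\Pnk{r-1}{0}\fME\bZ\lin{\Wu}$ via the shift equivalence $\Pnk{r-1}{0}\Omega^t\F(\bZ,\bW^k)\simeq\Omega^t\Pnk{r-1+t}{t}\F(\bZ,\bW^k)$. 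Consequently every $\alpha_j$, every witnessing nullhomotopy, and the final obstruction class defining $d_r\xi$ can be assembled from the $(r-1)$-truncated cosimplicial datum, using the cosimplicial coface maps (inherited as structure maps of the truncated object because $\Pnk{r-1}{0}$ is monoidal, see \S \ref{window}) and the $\bE$-module codegeneracies of \S \ref{rdegmod}; passing to cohomology then yields $E_{r+1}$.

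The main obstacle will be the careful bookkeeping underlying the second step: pinning down precisely that the $r$-fold obstruction theory for $d_r$ consumes only homotopy information of degree at most $r-1$ above the stem, in a form compatible with all the cosimplicial operators. One must handle simultaneously the degree shifts coming from the $\Omega^j$-factors in the fibres of the tower \wref{eqtottower} and from working at an arbitrary suspension $\Sigma^t\bZ$ of the source, and verify that the action \wref{eqaction} carried by the new enrichment on truncated spectral functors provides all the composition data needed to assemble the higher nullhomotopies that define $d_r$.
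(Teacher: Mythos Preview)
Your proposal is correct and follows essentially the same approach as the paper: both describe $d_r$ via the Tot tower and its successive lifting/obstruction data, and then observe that this data is recorded by simplices of dimension at most $r-1$ in the mapping spaces $\fME\bZ\lin{\bW^j}$, hence by the truncation $\Pnk{r-1}{0}$. The paper makes the bookkeeping you flag as the main obstacle explicit by encoding lifts through the adjunction $\sk_{n+j}(\Du)_+\otimes\Sigma^k\bZ\to\Wu$ and by invoking Mather's cube theorem to decompose maps and nullhomotopies into the limit diagram defining $\bF_{n+1}$; these are exactly the tools that pin down the degree count.
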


\begin{proof}
We recall the standard construction of the differentials in the homotopy
spectral sequence for the Tot tower of fibrations for \w[,]{\Xu:=\F(\bZ,\Wu)}
in terms of the interlocking long exact sequences of Figure \ref{fig1}.

%
%            Figure 1:  Spread out exact couple
%
\begin{figure}[htbp]
\begin{center}
\xymatrix@R=25pt@C=11pt{
\pi\sb{k+1}\Tot\sb{n}\Xu \ar[d]\sp{q\sp{n}} \ar[rr]\sp{\delta\sp{n}} &&
\pi\sb{k}\Omega\sp{n+1}N\sp{n+1}\Xu \ar[rr]\sp{j\sp{n+1}} &&
\pi\sb{k}\Tot\sb{n+1}\Xu \ar[d]\sp{q\sp{n+1}} \ar[rr]\sp(0.45){\delta\sp{n+1}} &&
\pi\sb{k-1}\Omega\sp{n+2}N\sp{n+2}\Xu\\
\pi\sb{k+1}\Tot\sb{n-1}\Xu \ar[rr]\sp{\delta\sp{n-1}} &&
\pi\sb{k}\Omega\sp{n}N\sp{n}\Xu \ar[rr]^{j\sp{n}} &&
\pi\sb{k}\Tot\sb{n}\Xu \ar[rr]\sp(0.45){\delta\sp{n}} &&
\pi\sb{k-1}\Omega\sp{n+1}N\sp{n+1}\Xu
}
\end{center}
\caption[fig1]{Exact couple for \w{\Tot} tower}
\label{fig1}
\end{figure}

Here the normalized chains for \w{\Xu} are given by
\w{N\sp{n}\Xu=\F(\bZ,\bF\sb{n})} (see \wref[).]{eqnormchain}

As we shall see below, the information needed to calculate the differentials at each
stage, consisting of various maps \w[,]{\bZ\to\bW\sp{i}} nullhomotopies thereof,
and so on:
\begin{enumerate}
\renewcommand{\labelenumi}{(\alph{enumi})}
\item can be expressed in terms of the mapping algebra \w{\fME\bZ} and the
  simplicial mapping algebra \w[;]{\fME\Wu}
\item in fact depends only on suitable truncations of these mapping algebras,
  if we are only calculating differentials up to the $r$-th stage.
\end{enumerate}

For this purpose, we think of the differential \w{d_r : E_r^{s,t} \to E_r^{s+r, t+r-1}}
as a ``relation'' (i.e., partially defined
map \w{E\sb{1}\sp{s,t} \to E\sb{1}\sp{s+r, t+r-1}} with a certain indeterminacy),
in the spirit of \cite{Bous89}.
Thus a class \w{\lra{\gamma}\in E_r^{n, n+k}} will be represented by an element
\w{\gamma \in E_1^{n, n+k}} such that \w[,]{d_1(\gamma)} $\cdots$ \w{d_{r-1}(\gamma)}
all have $0$ as a value.

In our interpretation, the value \w{[\beta]} we compute for the
differential \w{d\sb{j}} lies in \w{E_1^{n+j,
n+k+j-1}=\pi_0(\F(\Sigma^{n+k+j-1}\bZ, \bF_{n+j}))} (see
\wref[),]{spcseq} so its vanishing is witnessed by a choice of
nullhomotopy. This nullhomotopy takes value in a higher truncation
of the mapping algebra than the map $\beta$, which explains why
each successive differential requires a higher
  truncation\vsm .

\noindent\textbf{Step 1.}\hsm
Any class \w{\gamma\in E_1^{n, n+k}} is represented in turn by a map
\w[:]{\hat{g}:\Sigma^k \bZ \to \Omega^n \bF_n}
that is, a map \w{g:\Sigma^k\bZ \to \Tot_n {\Wu}} with \w{h_n \circ g=0} (see
\wref[).]{eqadams} By adjunction this defines a map of cosimplicial spectra
\w[.]{\Gu_n: \sk_n(\Du)\sb{+}\otimes \Sigma^k \bZ \to \Wu}
The value of the successive differentials \w{d_1(\gamma),\cdots,d_{r-1}(\gamma)}
serve as the successive obstructions to lifting \w{\Gu_n} to
\w[,]{\Gu_{n+1}: \sk_{n+1}(\Du)\sb{+}\otimes \Sigma^k \bZ \to \Wu} $\dotsc$
up to \w[.]{\Gu_{n+r-1}: \sk_{n+r-1}(\Du)\sb{+}\otimes \Sigma^k \bZ \to \Wu}

The cosimplicial map \w{\Gu_n: \sk_n(\Du)\sb{+}\otimes \Sigma^k \bZ\to\Wu}
consists of a sequence of maps of spectra
\w{G_n^j :\sk_n(\Delta[j])_+ \otimes \Sigma\sp{k}\bZ \to \bW^j} \wb[.]{j=0,1,\dotsc}
Since \w{\Wu} is Reedy fibrant,
\begin{myeq}\label{eqtotfibseq}
\Omega\sp{n}\bF\sb{n}~\to~\Tot\sb{n}\Wu~\xra{h\sb{n}}~\Tot\sb{n-1}\Wu
\end{myeq}
\noindent is a fibration sequence on the nose, so the fact that $\hat{g}$ lands
in \w{\Omega^n \bF_n} (and thus \w{G\sp{n}\sb{n}}
lands in \w[)]{\bF\sb{n}} implies that \w[.]{G_n^0=\cdots=G^{n-1}_n=0}
Moreover, \w{\sk_{n}\Delta[j]} is determined by \w{\Delta[j]}
and the coface maps in \w[,]{\Du}  for \w[,]{j>n} so the maps \w{G_n^j} \wb{j>n}
are determined by \w{G_n^n} and the coface maps of \w[.]{\Wu}

Note that \w{G_n^n} is adjoint to a map
\w[,]{\Sigma\sp{k}\bZ\to(\bW^n)\sp{\sk_n(\Delta[n])_+}} \wh in other words,
it is equivalent to a map \w{\widetilde{G}_n^n:\bS{0}\to\fME\Sigma\sp{k}\bZ\lin{\bM}} for
\w[,]{\bM:=(\bW^n)\sp{\sk_n(\Delta[n])_+}\in\TE} in terms of the
simplicial structure on $\bE$-modules (see \cite{SShipA})\vsm .

\noindent\textbf{Step 2.}\hsm
As noted above, $\gamma$ represents an element in \w{E_2} if
\w{d_1(\gamma) =0} in \w{E\sb{1}\sp{n+1,n+k}} \wh that is, if
\begin{myeq}\label{eqphi}
\phi~:=~\sum\sb{i=0}\sp{n}(-1)\sp{i}\,d\sp{i}\circ G\sb{n}\sp{n}
\end{myeq}
\noindent is nullhomotopic in
\w{\bF\sb{n+1}\subseteq\bW\sp{n+1}} (see Figure \ref{fig1}) The differential
\w{d_1(\gamma)} thus takes value in \w[\vsm.]{\pi\sb{0}\fME\bF\sb{n+1}}

\noindent\textbf{Step 3.}\hsm
By \wref[,]{eqnormchain} \w{\bF\sb{n+1}} is the (homotopy) limit of the
\w{3\times 3} diagram:
\mytdiag[\label{eqfibnorm}]{
  \bW\sp{n+1} \ar[rr]^{\top s\sp{j}}\ar[d] &&
  \prod\sb{j=0}\sp{n}\ \bW\sp{n}\ar[d] && \ast \ar[d] \ar[ll]\\
  \ast \ar[rr] && \ast && \ast \ar[ll]\\
    \ast \ar[rr] \ar[u] && \ast \ar[u] && \ast \ar[ll] \ar[u]
}
and similarly \w{P\bF\sb{n+1}} is the (homotopy) limit of the
\w{3\times 3} diagram:
\mytdiag[\label{eqfibs}]{
  (\bW\sp{n+1})\sp{\Delta[1]}\ar[d]^{\ev\sb{0}} \ar[rr]^{\top(s\sp{j})\sp{\Delta[1]}} &&
  \prod\sb{j=0}\sp{n}\ (\bW\sp{n})\sp{\Delta[1]} \ar[d]^{\prod\ev\sb{0}} &&
  \ast\sp{\Delta[1]}=\ast \ar[ll] \ar[d]\\
 \bW\sp{n+1} \ar[rr]^{\top s\sp{j}} && \prod\sb{j=0}\sp{n}\ \bW\sp{n} && \ast \ar[ll]\\
\ast \ar[u] \ar[rr] && \ast \ar[u] && \ast \ar[ll]\ar[u]
}
We have a map from \wref{eqfibs} to \wref{eqfibnorm}
induced by \w[,]{\ev\sb{1}} and by taking limits we obtain the path fibration
\w[.]{p:P\bF\sb{n+1}\to\bF\sb{n+1}}

Thus the path-loop fibration sequence for \w{\bF\sb{n+1}} is obtained by taking
iterated pullbacks of diagrams built from \w{\bW\sp{n}} and \w[,]{\bW\sp{n+1}}
first vertically, and then horizontally (see \cite[XI, 4.3]{BK72}).
We therefore see that both the class $\phi$ of \wref{eqphi} representing
\w{d_1(\gamma)} in \w[,]{\pi\sb{0}\fME\Sigma\sp{k}\bZ\lin{\bF\sb{n+1}}} and our
choice of a nullhomotopy $\Phi$ for it, are determined, according to
\cite[Theorem 10]{MathP}, by various compatible maps and nullhomotopies into the diagrams
\wref{eqfibs} and \wref[.]{eqfibnorm}

These maps and nullhomotopies, respectively,  correspond to maps and nullhomotopies,
respectively, from \w{\bS{0}} to
\w{\Pnk{1}{0}\fME\bZ\lin{\bW\sp{n+1}}} and \w[,]{\Pnk{1}{0}\fME\bZ\lin{\bW\sp{n}}}
(composed with
\w[)]{(s\sp{j})\sb{\ast}:\fME\Sigma\sp{k}\bZ\lin{\bW\sp{n+1}}\to
  \fME\Sigma\sp{k}\bZ\lin{\bW\sp{n}}} \wwh
which can be expressed in terms of the truncated mapping algebra
\w{\Pnk{1}{0}\fME\Sigma\sp{k}\bZ}
and the action on it of the free simplicial truncated mapping algebra
\w{\Pnk{1}{0}\fME\Wu}  (in the sense of \wref[)]{eqaction} \wwh in other words,
in terms of the $1$-truncated cosimplicial space
\w[.]{\Pnk{1}{0}\fME\Sigma\sp{k}\bZ\lin{\Wu}}

By a standard argument in the long exact sequence of the fibration \wref[,]{eqtotfibseq}
we can use $\Phi$ to extend \w{\Gu_n} to a map
\w[.]{\Gu\sb{n+1}: \sk\sb{n+1}(\Du)\sb{+}\otimes \Sigma^k \bZ \to \Wu}
Note that this is determined by
\w[,]{G\sp{n}\sb{n+1}:\Sigma^k \bZ \to(\bW\sp{n})\sp{\Delta[n]\sb{+}}}
\w[,]{G\sp{n+1}\sb{n+1}:\Sigma^k \bZ \to(\bW\sp{n+1})\sp{\Delta[n+1]\sb{+}}}
and the maps between them coming from the coface maps of \w[.]{\Wu}

Because \w{G\sp{j}\sb{n+1}=0} for \w[,]{j<n} the maps actually land in
\w{\Omega\sp{n}\bW\sp{n}} and \w[,]{\Omega\sp{n}\bW\sp{n+1}} respectively, so they
take value in
\w[\vsm.]{\Pnk{1}{0}\fME\Sigma\sp{k}\bZ\lin{\Omega\sp{n}\Wu}}

\noindent\textbf{Step 4.}\hsm
Assume by induction that, for \w[,]{r\geq 1} $\gamma$ represents an element in
\w[,]{E\sb{r}} so the differentials on $\gamma$ up to \w{d\sb{r-1}} vanish, and
we have an extension of \w{\Gu_n} to
$$
\Gu\sb{n+r-1}: \sk\sb{n+r-1}(\Du)_+ \otimes \Sigma^k \bZ~\to~\Wu
$$
with \w{G\sb{n+r-1}^j =0} for \w{0 \leq j \leq n-1}
(and again for \w[,]{j>n+r-1} \w{G\sb{n+r-1}^j} is determined by
\w{G\sb{n+r-1}\sp{n+r-1}} and the coface maps of \w[).]{\Wu}
As usual, we can extend this further to \w{\Gu\sb{n+r}} (for \emph{some} choice
of \w[)]{\Gu\sb{n+r-1}} if and only if \w{d\sb{r}(\gamma)} vanishes.

The map \w{\Gu\sb{n+r-1}} represents a class
\w{\alpha\sb{r-1}} in \w{\pi\sb{k}\Tot\sb{n+r-1}\Xu}
(as in Figure \ref{fig1}). Applying the connecting homomorphism
$$
\delta\sp{n+r-1}:\pi\sb{k}\Tot\sp{n+r-1}\Xu\to
\pi\sb{k-1}\Omega\sp{n+r}N\sp{n+r}\Xu
$$
to \w{\alpha\sb{r-1}} yields a class
\w[,]{[\beta\sb{r-1}]\in[\Sigma\sp{k-1}\bZ,\,\Omega\sp{n+r}\bF\sb{n+r}]} which represents
the value of \w[.]{d\sb{r}(\gamma)}

Note that \w{\beta\sb{r-1}} (as a map into \w[)]{\bF\sb{n+r}} is represented in turn as
in \wref{eqfibnorm} above by a map of spectra
  \w[,]{\widehat{b}\sb{r-1}:\Sigma\sp{k}\bZ\to\Omega\sp{n+r-1}\bW\sp{n+r}}
and thus by
\w{b\sb{r-1}\in(\Pnk{r-1}{0}\fME \Sigma\sp{k}\bZ\lin{\Omega\sp{n}\bW\sp{n+r}})\sb{r-1}}
(an \wwb{r-1}simplex in the simplicial set \w[,]{\Pnk{r-1}{0}(-)}
as in \S \ref{window}).

Our earlier choices of \w[,]{\Gu\sb{n+r-2}} $\dotsc$, \w[,]{\Gu\sb{n}} also come into
the picture in the form of (iterated) coface maps of \w{\Wu} applied to earlier
simplices \w[,]{\beta\sb{r-2}} $\dotsc$, \w[.]{\beta\sb{1}} This is why we need all
of \w[,]{\Pnk{r-1}{0}\fME \Sigma\sp{k}\bZ \lin{\Omega\sp{n}\Wu}} and not just its
\wwb{r-1}simplices. See \cite[\S 5]{BBS17} for an explicit description of the
combinatorics in a slightly different formulation (which is not needed here).

We thus see by induction that the choice of \w[,]{\Gu\sb{n+r-1}}
as well as the value of \w[,]{d\sb{r}(\gamma)} may be expressed
in terms of \w[.]{\Pnk{r-1}{0}\fME \Sigma\sp{k}\bZ \lin{\Omega\sp{n}\Wu}}

If \w{d\sb{r}(\gamma)} vanishes, for some collection of choices as above, the map
\w{\beta\sb{r-1}} is nullhomotopic; as in Step 3, the choice of a nullhomotopy \wh
and thus the lift of \w{\Gu\sb{n+r-1}} to \w{\Gu\sb{n+r}} and the resulting value
of \w{d\sb{r+1}(\gamma)} \wwh is encoded one simplicial dimension higher \wh that is,
in the cosimplicial space \w[.]{\Pnk{r}{0}\fME \Sigma\sp{k}\bZ\lin{\Omega\sp{n}\Wu}}

Finally, note that up to homotopy the mapping algebra \w{\fME\Sigma\sp{k}\bZ} is just
\w[,]{\Omega\sp{k}\fME\bZ} since it is a homotopy spectral functor, and for the
same reason \w[.]{\fME\Sigma\sp{k}\bZ \lin{\Omega\sp{n}\Wu}\simeq
  \Omega\sp{n}\fME\Sigma\sp{k}\bZ \lin{\Wu}}
\end{proof}

\begin{mysubsection}{Resolution model categories}
\label{srmc}
Since any spectrum is a homotopy group object in \w[,]{\Sp}
from Lemma \ref{yoneda} we see that for all \w[,]{\bM \in \TE} the free spectral functor
\w{\fME\bM} is a homotopy cogroup object in \w[.]{\ST}

Thus by \cite[Theorem 2.2.]{Jar04}:

\begin{enumerate}
\renewcommand{\labelenumi}{(\alph{enumi})}
\item There is a resolution model category structure on
\w[,]{(\ST)\sp{\Delta\op}=\Sp\sp{\TE\times\Delta\op}} in which the weak equivalences
are the $E^2$-equivalences (cf.\ \S \ref{detwe}): that is, maps \w{f : \fUd \to \fWd}
of simplicial spectral functors such that for each \w{\bM \in \TE} the induced map
\w{\pi\sb{0}\fUd\lin{\bM}\to\pi\sb{0}\fWd\lin{\bM}} of simplicial groups is
a weak equivalence.
\item Similarly, for each\w[,]{\bM \in \TE} any fibrant and cofibrant replacement
for \w{\fME\bM}  in the \ww{\Pnk{r}{0}}-model structure on \w{\ST}
is a homotopy cogroup object there,
so by Proposition \ref{pmodelcat}, \w{(\ST)\sp{\Delta\op}}
also has a \w{\Pnk{r}{0}} resolution model category structure,
with the same $E^2$-equivalences.
\item Finally, given a cosimplicial $\bE$-module \w[,]{\Wu} 
let \w{\Thw} denote the simplicially enriched category whose objects are
\w{\bW\sp{i}} \wb{i=0,1,2\dotsc} with truncated simplicial mapping spaces
\w{\umap(\bW\sp{i},\bW\sp{j}):=\Pnk{r}{0}\F(\bW\sp{i},\bW\sp{j})\sb{0}} as in
\S \ref{window}. The category \w{\Spa\sp{\Thw}} of simplicial functors (with
respect to \w[)]{\umap} also has a proper model category structure
(see \cite[\S 1.23]{BBC19}), and from Lemma \ref{lyoneda} we see that
\w{\Pnk{r}{0}\fME\bW\sp{i}} is a cogroup object in \w[,]{\Spa\sp{\Thw}} so we
get a corresponding resolution model category structure on the simplicial
objects \w{(\Spa\sp{\Thw})\sp{\Delta\op}} (see \cite[\S 2.12]{BBC19}).
\end{enumerate}

(The cosimplicial spectrum we actually have in mind in (c), in the context of
the proof of Theorem \ref{pdiff}, is \w[).]{\Omega\sp{n}\Wu}
\end{mysubsection}

We now have:

\begin{prop}\label{E2_equi}
Let \w{\Wu} be constructed from $\bY$ as in \S \ref{scsby} and assume \w{\fUd} is any
resolution of \w{\fX=\fME\bY} (that is, a cofibrant replacement,
in the model category structure of \S \ref{srmc}(b), for the simplicial
spectral functor \w{c\sb{\bullet}(\fX)} which is $\fX$ in each simplicial dimension);
then \w{\Pnk{r}{0}\fME\Wu} is
$E^2$-equivalent to \w[.]{\Pnk{r}{0}\fUd}
\end{prop}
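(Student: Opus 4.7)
The plan is to exhibit both $\Pnk{r}{0}\fME\Wu$ and $\Pnk{r}{0}\fUd$ as being $E^2$-equivalent to the constant simplicial object $c_\bullet(\Pnk{r}{0}\fX)$, and then combine these two equivalences (via a zigzag, or directly by lifting) in the $\Pnk{r}{0}$-resolution model category structure of \S\ref{srmc}(b). The argument rests on two ingredients: Proposition \ref{acyclic_reso}, which handles the $\fME\Wu$ side, and the observation that the levelwise truncation functor $\Pnk{r}{0}$ preserves $E^2$-equivalences.

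First I would invoke Proposition \ref{acyclic_reso}: since $\fX=\fME\bY$ is representable, it is a homotopy functor (see \S\ref{mapalg}), so the augmentation $\fME\Wu\to c_\bullet(\fX)$ is an $E^2$-equivalence in $(\ST)\sp{\Delta\op}$. Separately, $\fUd$ is by hypothesis a cofibrant replacement of $c_\bullet(\fX)$ in the $\Pnk{r}{0}$-resolution model structure; since its weak equivalences coincide with the $E^2$-equivalences (see \S\ref{srmc}(b)), the augmentation $\fUd\to c_\bullet(\fX)$ is also an $E^2$-equivalence.

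Next I would verify that $\Pnk{r}{0}$ preserves $E^2$-equivalences. By Definition \ref{detwe}, a map $\ff:\fWd\to\fUd$ is an $E^2$-equivalence precisely when the induced map $\pi_0\fWd\lin{\bM}\to\pi_0\fUd\lin{\bM}$ of simplicial abelian groups is a weak equivalence for every $\bM\in\TE$. For any spectrum $\bA$ and $r\geq 0$, one has $\pi_0(\Pnk{r}{0}\bA\sb{0})=\pi_0\bA$, since $\Pnk{r}{0}$ affects only homotopy in degrees above $r$; consequently applying $\Pnk{r}{0}$ levelwise to a simplicial spectral functor does not alter the $\pi_0$-information, and the $E^2$-equivalence condition carries over. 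Combining with the previous step yields $E^2$-equivalences $\Pnk{r}{0}\fME\Wu\to c_\bullet(\Pnk{r}{0}\fX)$ and $\Pnk{r}{0}\fUd\to c_\bullet(\Pnk{r}{0}\fX)$.

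To conclude, two objects weakly equivalent to a common target in a model category are themselves weakly equivalent: either via the evident zigzag through $c_\bullet(\Pnk{r}{0}\fX)$, or directly by using the cofibrancy of $\Pnk{r}{0}\fUd$ together with a fibrant replacement of $\Pnk{r}{0}\fME\Wu$ in the $\Pnk{r}{0}$-resolution model structure and the lifting axiom. The main subtlety I anticipate is the clean verification that $\Pnk{r}{0}$ interacts with the simplicial structure so as to preserve $\pi_0$ at every simplicial level (rather than merely degreewise on spectra); once this is in place, the result is essentially a packaging of Proposition \ref{acyclic_reso} with the uniqueness of homotopy types in the relevant model structure.
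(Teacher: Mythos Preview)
Your proposal is correct and follows essentially the same line as the paper's proof: both invoke Proposition \ref{acyclic_reso} to see that $\fME\Wu\to c_\bullet(\fX)$ is an $E^2$-equivalence, observe that $\Pnk{r}{0}$ preserves $\pi_0$ (hence $E^2$-equivalences), and conclude by comparing the two resolutions of $\Pnk{r}{0}\fX$. The paper phrases the last step by noting that $\Pnk{r}{0}\fME\Wu$ is itself a resolution of $\Pnk{r}{0}\fX$ in the model structure of \S\ref{srmc}(c), while you route through the zigzag in the structure of \S\ref{srmc}(b); this is a cosmetic difference only.
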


\begin{proof}
Since
$$
\pi_j \Pnk{r}{0} \fX\lin{\bM}\cong
  \begin{cases}\pi_j \fX\lin{\bM} &\text{for}\ 0\leq j \leq r\\0 & \text{otherwise,}
  \end{cases}
$$
  this follows from Proposition \ref{acyclic_reso}, and the fact that
  \w{\Pnk{r}{0}\fME\Wu} is a resolution of \w{\Pnk{r}{0}\fX} in the
  model category structure of \S \ref{srmc}(c).
\end{proof}

From Theorem \ref{pdiff} and Proposition \ref{E2_equi} we deduce:

\begin{thm}\label{main2}
If \w{\bE=\bH R} for a commutative ring $R$, $\bZ$ is a fixed finite spectrum, and
$\bY$ is a $\bE$-good spectrum, then for any \w{r\geq 0} the
\Elt{r+2} of the $\bE$-based Adams spectral sequence for \w{\F(\bZ,\bY)} is
determined by the truncated mapping algebra \w[.]{\Pnk{r}{0}\fME\bY}
\end{thm}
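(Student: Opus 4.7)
The plan is to combine Theorem \ref{pdiff}, applied with the index shifted by one, with Proposition \ref{E2_equi}, and then to replace the full resolution \w{\fME\Wu} by one built intrinsically from the truncated mapping algebra \w{\Pnk{r}{0}\fME\bY} in the truncated resolution model category of \S \ref{srmc}.

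First, apply Theorem \ref{pdiff} with $r$ replaced by \w[:]{r+1} this shows that the \Elt{r+2} of the \ww{\bE}-based Adams spectral sequence for \w{\F(\bZ,\bY)} is determined by the cosimplicial $r$-truncated space \w[,]{\Pnk{r}{0}\fME\bZ\lin{\Wu}} where \w{\Wu} is the cosimplicial spectrum constructed in \S \ref{scsfx} from \w{\fX=\fME\bY} via iteration of the comonad \w[.]{\SE}

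Next, invoke Proposition \ref{E2_equi}: \w{\Pnk{r}{0}\fME\Wu} is \ww{E\sp{2}}-equivalent in \w{(\ST)\sp{\Delta\op}} to \w{\Pnk{r}{0}\fUd} for any cofibrant resolution \w{\fUd} of \w{c\sb{\bullet}(\fME\bY)} in the model structure of \S \ref{srmc}(b). Evaluating at \w{\bZ} preserves \ww{E\sp{2}}-equivalences by naturality of the action, so \w{\Pnk{r}{0}\fME\bZ\lin{\Wu}} is \ww{E\sp{2}}-equivalent to \w[,]{(\Pnk{r}{0}\fUd)\lin{\bZ}} an invariant of any such resolution.

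The decisive step, using the hypothesis \w[,]{\bE=\bH R} is to build a cofibrant resolution of \w{c\sb{\bullet}(\Pnk{r}{0}\fME\bY)} from \w{\Pnk{r}{0}\fME\bY} alone, inside the truncated resolution model category of \S \ref{srmc}(c). By Lemma \ref{lyoneda}, each truncated free mapping algebra \w{\Pnk{r}{0}\fME\bM} is a cogroup object in \w[,]{\Spa\sp{\Thw}} so the resolution model structure on \w{(\Spa\sp{\Thw})\sp{\Delta\op}} is available. For \w[,]{\bE=\bH R} a truncated version of the Stover-type cotriple construction of Section \ref{csma} produces an augmented simplicial object \w{\fYd\to c\sb{\bullet}(\Pnk{r}{0}\fME\bY)} which is a cofibrant resolution in this truncated model category and whose construction only uses data encoded in the $r$-truncation. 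Since both \w{\Pnk{r}{0}\fME\Wu} and \w{\fYd} are cofibrant resolutions of the same object in the \ww{\Pnk{r}{0}}-model category, they are \ww{E\sp{2}}-equivalent; evaluating at \w{\bZ} then gives an \ww{E\sp{2}}-equivalence \w[,]{\fYd\lin{\bZ}\simeq\Pnk{r}{0}\fME\bZ\lin{\Wu}} so by the first step the \Elt{r+2} is determined by \w[.]{\Pnk{r}{0}\fME\bY}

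The main obstacle is this third step: ensuring that a truncated Stover-type cotriple genuinely produces a cofibrant resolution inside the truncated model category of \S \ref{srmc}(c), and that this construction depends only on the data encoded in \w[.]{\Pnk{r}{0}\fME\bY} The hypothesis \w{\bE=\bH R} is invoked to guarantee that the pullback diagrams \wref{dualstover} defining the Stover construction remain meaningful after truncation (pullbacks over path fibrations and products of \ww{\bH R}-modules behave well under \w[),]{\Pnk{r}{0}} and that the resulting augmented simplicial object is in fact a cofibrant resolution of \w{c\sb{\bullet}(\Pnk{r}{0}\fME\bY)} in the resolution model structure on \w[.]{(\Spa\sp{\Thw})\sp{\Delta\op}}
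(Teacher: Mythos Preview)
There is a genuine gap in your third step, and it stems from a confusion about what is being evaluated where. The object \w{\Pnk{r}{0}\fME\bZ\lin{\Wu}} appearing in Theorem \ref{pdiff} is the mapping algebra \emph{represented by $\bZ$}, evaluated at the cosimplicial $\bE$-module \w{\Wu} \wh in other words, the truncation of \w[.]{\F(\bZ,\Wu)} Your resolutions \w{\fUd} and \w{\fYd} are simplicial objects in spectral functors \emph{on} \w[;]{\TE} their arguments are $\bE$-modules, and $\bZ$ is not one, so expressions like \w{(\Pnk{r}{0}\fUd)\lin{\bZ}} or \w{\fYd\lin{\bZ}} are not defined. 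What is actually required is a cosimplicial \emph{spectrum} \w{\Uu} (with each \w{\bU\sp{n}} an $\bE$-module) so that one may form \w{\fME\bZ\lin{\Uu}} and hence a homotopy spectral sequence to compare with that of \w[.]{\Wu} An \ww{E\sp{2}}-equivalence of simplicial truncated spectral functors does not by itself produce such a comparison of spectral sequences.

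This is exactly the step the paper isolates, and it is where the hypothesis \w{\bE=\bH R} enters \wh not, as you suggest, in making a truncated Stover construction behave. The paper takes a resolution \w{\fUd} of \w{\fME\bY} in the \ww{\Pnk{r}{0}}-model structure of \S \ref{srmc}(b) (which depends only on \w[),]{\Pnk{r}{0}\fME\bY} and then invokes \cite[Theorem 3.21\textit{ff.}]{BBC19} to \emph{realize} it by an honest cosimplicial resolution \w{\Uu} of $\bY$ in \w{\Sp\sp{\Delta}} with \w{\fME\Uu} Reedy equivalent to \w[.]{\fUd} One then has an \ww{E\sp{2}}-equivalence \w{\Uu\to\Wu} of cosimplicial spectra, hence an isomorphism of the associated spectral sequences from the \Ett on, and Theorem \ref{pdiff} (applied to \w[)]{\Uu} finishes the argument. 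The realization result of \cite{BBC19} is only known for \w[,]{\bE=\bH R} which is why the theorem carries that hypothesis; your proposed truncated Stover construction does not furnish a cosimplicial spectrum and so cannot replace it.
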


\begin{proof}
Let \w{\fUd} be any resolution of \w{\fME\bY} in the model category structure
of \S \ref{srmc}(b) (which depends only on \w[,]{\Pnk{r}{0}\fME\bY} up to
$E^2$-equivalence).
By   \cite[Theorem 3.21\textit{ff.}]{BBC19}, we can construct a cosimplicial resolution
\w{\Uu} of $\bY$ in the resolution model category structure on \w{\Sp\sp{\Delta}} of
\cite[\S 3]{Bou03}, such that \w{\fME\Uu} is Reedy equivalent to \w{\fUd} (that is,
there is a map of simplicial spectral functors \w{\ff:\fME\Uu\to\fUd} with each
\w{\ff\sb{n}:\fME\bU\sp{n}\to\fU\sb{n}} a weak equivalence of spectral functors).

Thus the truncated cosimplicial space \w{\Pnk{r}{0}\fME\bZ\lin{\Uu}} is well defined
up to Reedy weak equivalence. Moreover, there is an $E^2$-equivalence
\w{\fg:\Uu\to\Wu} (where \w{\Wu} is the cosimplicial spectrum  of \S \ref{scsfx}),
  which induces an $E^2$-equivalence of truncated cosimplicial spaces
  \w[,]{\Pnk{r}{0}\fME\bZ\lin{\Uu}\to\Pnk{r}{0}\fME\bZ\lin{\Wu}} and thus a
  map of spectral sequences which is an isomorphism form the \Ett on.
The result then follows from Theorem \ref{pdiff}.
\end{proof}

This presumably holds for any ring spectrum $\bE$, though the results of
\cite[\S 3]{BBC19} are only known for \w[.]{\bH R}
 
\begin{remark}\label{rsummary}
Our main goal here was to show what sort of general information
about $\bE$-modules, combined with what specific data on $\bY$ and
$\bZ$, suffice to determine the \Elt{r} of the $\bE$-based Adams
spectral sequence for \w{\F(\bZ,\bY)} \wwh modelled on the way the
\Elt{2} is a functor of \w{E\sp{\ast}\bY} (under favorable
assumptions on $\bE$).

As Theorems \ref{pdiff} and \ref{main2} show, the necessary data can be described in
the language of truncated mapping algebras, our main object of study here.
For \w[,]{\bE=\HFp} \w[,]{\bZ=\bS{0}} and \w[,]{r=2} this data reduces to the knowledge
of \w{H\sp{\ast}(\bY;\Fp)} as a module over the Steenrod algebra, as in \cite{Ada58}.
\end{remark}

\end{document}